\documentclass{amsart} 
\usepackage{amsthm, amsmath, mathtools}
\usepackage{amssymb}
\usepackage{amscd}
\usepackage{enumerate}
\usepackage{mathrsfs} 
\usepackage[curve,matrix,arrow,frame,tips]{xy}
\usepackage{colonequals}
\usepackage{verbatim}
\usepackage{pdfsync}
\usepackage{graphicx}
\usepackage[usenames,dvipsnames]{color}

\usepackage{url}

\usepackage{fullpage}

\usepackage[normalem]{ulem}

\numberwithin{equation}{section} 

\theoremstyle{plain}
\newtheorem{theorem}{Theorem}[section]

\newtheorem{lemma}[theorem]{Lemma}

\newtheorem{corollary}[theorem]{Corollary}

\newtheorem{question}[theorem]{Question}

\theoremstyle{definition}
\newtheorem{definition}[theorem]{Definition}

\theoremstyle{remark}
\newtheorem{remark}[theorem]{Remark}

\newcommand{\Q}{\mathbb{Q}}

\newcommand{\Gm}{{\mathbb{G}_m}}
\newcommand{\Z}{\mathbb{Z}}

\DeclareMathOperator{\Spec}{Spec}
\DeclareMathOperator{\Hom}{Hom}

\DeclareMathOperator{\GL}{GL}

\DeclareMathOperator{\Char}{char}
\DeclareMathOperator{\Gal}{Gal}

\DeclareMathOperator{\Pic}{Pic}
\DeclareMathOperator{\Br}{Br}
\DeclareMathOperator{\GSp}{GSp}

\newcommand{\PP}{\mathbb{P}}

\renewcommand{\to}{\longrightarrow}

\DeclareMathOperator{\Heis}{\mathcal{H}}

\newcommand{\et}{\mathrm{\acute{e}t}}

\usepackage{microtype}


\begin{document}

\title{Unramified Heisenberg group extensions of number fields }

\author[F. Bleher]{Frauke M. Bleher}
\address{F.B.: Department of Mathematics\\University of Iowa\\
14 MacLean Hall\\Iowa City, IA 52242-1419\\ U.S.A.}
\email{frauke-bleher@uiowa.edu}
\thanks{The first author was supported in part by NSF  Grant No.\ DMS-1801328.}

\author[T. Chinburg]{Ted Chinburg}
\address{T.C.: Department of Mathematics\\University of Pennsylvania\\
Philadelphia, PA 19104-6395\\ U.S.A.}
\email{ted@math.upenn.edu}
\thanks{The second author is the corresponding author. He was supported in part by NSF FRG Grant No.\ DMS-1360767, NSF SaTC grants No. CNS-1513671 and No. CNS-1701785.}

\author[J. Gillibert]{Jean Gillibert}
\address{J. G.:  Institut de Math{\'e}matiques de Toulouse \\ CNRS UMR 5219 \\
118, route de Narbonne, 31062 Toulouse Cedex \\ France.}
\email{Jean.Gillibert@math.univ-toulouse.fr}

\date{\today}

\subjclass[2010]{11R20 (Primary) 11G30, 14H30 (Secondary)}
\keywords{}

\begin{abstract}We construct \'etale  generalized Heisenberg group covers of hyperelliptic curves over number fields.  We use these to produce  infinite families of quadratic extensions of cyclotomic fields that admit everywhere unramified generalized Heisenberg Galois extensions.
\end{abstract}

\maketitle


\section{Introduction}
\label{s:intro}

Let $G$ be a finite group and suppose $K$ is a number field.  
In this paper we consider the problem of constructing infinitely many unramified Galois $G$-extensions $M/L$ of number fields 
for which $L$ has bounded degree over  $K$.  When one imposes no conditions on the ramification of $M/L$, a classical technique
is to specialize a $G$-cover  $\pi:C' \to C$ of curves over  $K$ for which $C$ has many points over such $L$.  To construct $M/L$ that are unramified,
a first step is to ensure that $\pi$ itself is unramified.  One can then
find a model of $\pi$ over a ring of $S$-integers of $K$ that is unramified, and control the ramification of $M/L$ over places in $S$ by
imposing conditions on how one specializes $\pi$.  This approach was used in \cite{bg18} by Bilu and the third author (see also \cite{Lev08}) to construct abelian unramified extensions of quadratic extensions
of a given number field, using abelian covers $\pi$ of a hyperelliptic curve $C$. 

 In this
paper we consider  nilpotent groups $G$.   The essential obstruction to the above technique is the construction of an unramified $G$-cover
$\pi:C' \to C$ over $K$.  When $G$ is abelian, one constructs such $\pi$ by imposing conditions on the $K$-rational torsion points of the Jacobian of $C$.
For more general nilpotent covers, there is a  technique which combines abelian information with data on cup products and higher Massey products
of characters of Galois groups;  see  \cite{Sharifi, SharifiCrelle} and their references, for example.   However, the calculation of cup products is 
more difficult when $K$ is not algebraically closed.  This is due to the fact that $H^2(C,\mu_n)$ contains a subgroup isomorphic to $\mathrm{Pic}^0(C)/n\mathrm{Pic}^0(C)$ when $\mu_n$ is the \'etale sheaf of $n$-{th} roots of unity, and this subgroup is in general non-trivial.  

 The main innovation in this paper is to exploit the  action of $\mathrm{Aut}_K(C)$ on cup products in order to construct unramified nilpotent covers $\pi:C' \to C$.
We will illustrate this by taking $G$ to be  a (generalized) Heisenberg group of the form $\Heis_{2d+1}(\Z/n\Z)$; see Section~\ref{s:Hgroups} for a definition of this group. 
It is 
non-abelian and nilpotent of order $n^{2d+1}$ and it is contained in the subgroup of unipotent upper-triangular matrices in $\GL_{d+2}(\Z/n\Z)$. For example, we will produce
by the above methods  some infinite families of everywhere unramified $\Heis_{3}(\Z/n\Z)$-extensions $M/L$ with $L$ a quadratic extension of $\mathbb{Q}(\zeta_n)$ (cf. Theorem~\ref{thm:intro}). This method differs from other approaches based on group theory, Hurwitz spaces and the Inverse Galois problem which we will describe in Remark \ref{rem:priorresults}.  Previous work by V\"olklein and others (see \cite{Volk1} and \cite{Volk2}) leads to alternate proofs of some of the results in this paper, sometimes with stronger hypotheses, e.g. that $n$ is prime.

Let $C$ be a smooth projective hyperelliptic curve over a number field $K$.  Our first result, Theorem~\ref{thm:main}, shows that the existence of an {\'e}tale $\Heis_{2d+1}(\Z/n\Z)$-cover of  $C$ is equivalent to the existence of two families (each one having $d$ elements) of $n$-torsion line bundles on $C$, which are globally orthogonal under the Weil pairing. We state Theorem~\ref{thm:main} in the setting of twisted Heisenberg group schemes, in which $\Z/n\Z$ is replaced by $\mu_n$;  this allows us to work over smaller fields in the absence of roots of unity.
As an illustration of Theorem~\ref{thm:main}, we give at the end of Section~\ref{s:GeoH} several explicit examples of Heisenberg Galois covers of hyperelliptic curves.
In this case, the strategy of applying elements of $\mathrm{Aut}_K(C)$ to control cup products amounts to using the hyperelliptic involution of $C$ to
show that the relevant cup product over $K$ is trivial provided its base change to $\overline{K}$ is trivial.

We will apply the specialization results of \cite{bg18} to the examples of covers constructed in Section~\ref{s:GeoH}.  Suppose one is given a connected {\'e}tale cover $\pi:C' \to C$ over a number field $K$ in which some $K$-rational point $P_0$ splits completely.  In \cite{bg18},  it was shown that one can find infinitely many (in a strong quantitative sense) points in $C(\overline{K})$  such that the extension of number fields that results on specializing $\pi$ over these points has the same degree as $\pi$ and is everywhere unramified. This leads to  the following result.

\begin{theorem}
\label{thm:intro}
Let $n>1$ be an odd integer, and let $\zeta_n$ be a primitive $n$-th root of unity. Then there exist infinitely many quadratic extensions $L/\Q(\zeta_n)$ which admit a Galois extension  with group $\Heis_3(\Z/n\Z)$, unramified at all finite places of $L$. Moreover, given a finite set $S$ of places of $\Q(\zeta_n)$, which may contain finite and infinite places, there exist infinitely many such $L$ for which primes lying above $S$ in $L$ are totally split in the corresponding Galois extension.

Furthermore, there is a constant $c>0$, which depends only on $n$ and $S$, for which the following is true.  For sufficiently large positive $N$, the number of (isomorphism classes of) such fields $L$ whose relative discriminant $\Delta(L/\Q(\zeta_n))$ satisfies
$$
\bigl|\mathrm{N}_{\Q(\zeta_n)/\Q}\Delta(L/\Q(\zeta_n))\bigr|^{1/\varphi(n)} \leq N
$$
is at least $cN^{\varphi(n)/(4n-2)}/\log N$, where $\varphi$ is Euler's function.
\end{theorem}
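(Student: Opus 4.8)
The plan is to combine the geometric construction of Theorem~\ref{thm:main} with the arithmetic specialization machinery of \cite{bg18}. I would work throughout over $K=\Q(\zeta_n)$, where $\mu_n\cong\Z/n\Z$, so that the twisted Heisenberg group scheme of Theorem~\ref{thm:main} becomes the constant group $\Heis_3(\Z/n\Z)$ and no twisting survives.

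First I would produce the cover. Since $\Heis_3(\Z/n\Z)=\Heis_{2d+1}(\Z/n\Z)$ has $d=1$, Theorem~\ref{thm:main} reduces the problem to exhibiting, on a single hyperelliptic curve $C/K$, a pair of $n$-torsion line bundles $\L_1,\L_2\in\Pic^0(C)[n]$ that are globally orthogonal under the Weil pairing. For $C$ I would take one of the explicit curves constructed at the end of Section~\ref{s:GeoH}: a hyperelliptic curve $y^2=f(x)$ with $\deg f=4n-2$ carrying the order-$n$ automorphism $(x,y)\mapsto(\zeta_n x,\zeta_n^{-1}y)$, which is defined over $K$ precisely because $\zeta_n\in K$, and from which the computation there extracts the two orthogonal classes $\L_1,\L_2$. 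Oddness of $n$ enters exactly as advertised in the introduction: the hyperelliptic involution $\iota$ acts by $-1$ on $\Pic^0(C)$ and hence trivially on $\L_1\cup\L_2$, while it acts by $-1$ on the subgroup $\Pic^0(C)/n\Pic^0(C)\subset H^2(C,\mu_n)$ that obstructs geometric detection of cup products; since $n$ is odd, $2$ is invertible and this subgroup has no $\iota$-invariants, so the relevant cup product over $K$ vanishes once it vanishes over $\overline K$, which the orthogonality of $\L_1,\L_2$ guarantees. Feeding $\L_1,\L_2$ into Theorem~\ref{thm:main} then gives an \'etale $\Heis_3(\Z/n\Z)$-cover $\pi\colon C'\to C$ over $K$; I would check that $\L_1,\L_2$ are independent in $\Pic^0(C)[n]$ so that $\pi$ is connected (the full group, not a proper quotient, is realized), and I would exhibit from the explicit model a $K$-rational point $P_0\in C(K)$ whose fiber consists of $n^3$ distinct $K$-points, i.e.\ one that splits completely in $\pi$.

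Next I would specialize. The degree-$2$ map $x\colon C\to\PP^1$ sends a rational point $t\in\PP^1(K)$ to a fiber with quadratic residue field $L_t=K(\sqrt{f(t)})$, so applying the specialization theorem of \cite{bg18} to $\pi$ and the completely split point $P_0$ produces infinitely many $t$ for which the specialized cover defines a Galois extension $M_t/L_t$ of the same degree $n^3$, that is with group $\Heis_3(\Z/n\Z)$, unramified at every finite place of $L_t$. The point $P_0$ is what lets one prescribe local behaviour: imposing congruence conditions on $t$ at the finitely many places above a given $S$ forces the primes of $L_t$ over $S$ to split totally in $M_t$, yielding the second assertion.

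The quantitative bound would follow from the effective form of \cite{bg18}, and this is where the main difficulty lies. The relative discriminant $\Delta(L_t/K)$ is the squarefree part of $f(t)$, so for integral $t\in\O_K$ of house $\le T$ one has $|\mathrm N_{K/\Q}\Delta(L_t/K)|\ll T^{(4n-2)\varphi(n)}$; the normalization $|\mathrm N_{K/\Q}\Delta(L_t/K)|^{1/\varphi(n)}\le N$ thus corresponds to $T\ll N^{1/(4n-2)}$, and since $\O_K$ is a lattice of rank $\varphi(n)$ there are $\gg N^{\varphi(n)/(4n-2)}$ such $t$. The hard part is to show that enough of these survive: one must run a squarefree sieve to guarantee that a positive proportion of the values $f(t)$ have large squarefree part, control the collisions $L_t=L_{t'}$ so that the number of \emph{distinct} fields is not thinned by more than a constant factor, and simultaneously respect the finitely many congruence conditions at the places over $S$ and at the places dividing $2n$ and the bad places of $\pi$. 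Carrying this out as in \cite{bg18} produces the stated lower bound $cN^{\varphi(n)/(4n-2)}/\log N$, the $\log N$ being the usual loss from the squarefree sieve. By comparison, the construction of the cover is soft once Theorem~\ref{thm:main} and the explicit curve of Section~\ref{s:GeoH} are available.
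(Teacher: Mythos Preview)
Your overall strategy is exactly the paper's: work over $K=\Q(\zeta_n)$ so that $\Heis_3(\mu_n)$ becomes constant, invoke Corollary~\ref{cor:H1} to produce a geometrically connected \'etale $\Heis_3(\Z/n\Z)$-torsor over an explicit hyperelliptic curve $C/K$ splitting over a rational Weierstrass point, and then feed this into Theorem~\ref{thm:specialization}, which packages the specialization results of \cite{bg18}.

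That said, several concrete details are wrong. The curve of Corollary~\ref{cor:H1} is $y^2=x^{2n}-(1+\lambda^2)x^n+\lambda^2$: it has degree $2n$ (hence genus $g=n-1$), it is defined already over $\Q$ and merely base-changed to $K$, and its order-$n$ automorphism is $(x,y)\mapsto(\zeta_n x,y)$, not the one you wrote. The Weil orthogonality $e_n(\L_1,\L_2)=1$ is not verified by an explicit computation but by the one-line observation that both classes live in $\Pic^0(C)[n]$ over $\Q$, so the pairing lands in $\mu_n(\Q)=\{1\}$. Finally, your quantitative paragraph reaches the right exponent $\varphi(n)/(4n-2)$ only by asserting $\deg f=4n-2$, which is false for this curve; the denominator $4n-2$ in Theorem~\ref{thm:specialization} is $4g(C)+2=4(n-1)+2$, i.e.\ twice the odd degree $2g+1$ of the Weierstrass model, and your heuristic sieve misses exactly this factor of $2$ that enters in \cite{bg18}. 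None of this affects the architecture of the proof, but you should correct the curve and cite Theorem~\ref{thm:specialization} for the count rather than redo the sieve.
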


Two potential generalizations of our results are (i) to replace hyperelliptic curves by more general ones, and (ii) to replace Heisenberg
groups by more general nilpotent groups.  These generalizations are connected because to treat nilpotent groups with higher nilpotency, one must consider higher Massey products.  To control such products with automorphisms of the base curve will require using more than hyperelliptic involutions.

\medbreak

\noindent {\bf Acknowledgements.} The first and second authors would like to thank the University of Toulouse for its support and hospitality during work on this paper.   
The authors would also like to thank the referee for many helpful suggestions. 


\section{Heisenberg groups}
\label{s:Hgroups}

In this section, we fix two integers $n>1$ and $d\geq 1$. The Heisenberg group of rank $2d+1$ with coefficients in $\Z/n\Z$, denoted by $\Heis_{2d+1}(\Z/n\Z)$, is the subgroup of $\GL_{d+2}(\Z/n\Z)$ consisting of matrices of the form
$$
\begin{pmatrix}
 1 & \mathbf{a} & c\\
 0 & I_d & \mathbf{b}\\
 0 & 0 & 1\\
\end{pmatrix}
$$
where $I_d$ is the $d\times d$ identity matrix, $\mathbf{a}$ (resp. $\mathbf{b}$) is a row (resp. column) vector of length $d$ with coefficients in $\Z/n\Z$, and $c$ belongs to $\Z/n\Z$. The center of this group is the set of matrices satisfying $\mathbf{a}=0$ and $\mathbf{b}=0$, which is isomorphic to $\Z/n\Z$. It follows that we have an exact sequence of groups
\begin{equation}
\label{eq:HCE}
\begin{CD}
0 @>>> \Z/n\Z @>>> \Heis_{2d+1}(\Z/n\Z) @>>> (\Z/n\Z)^{2d} @>>> 0. \\
\end{CD}
\end{equation}
Thus $\Heis_{2d+1}(\Z/n\Z)$ is a central extension of $(\Z/n\Z)^{2d}$ by $\Z/n\Z$.

The twisted Heisenberg group scheme of rank $2d+1$ over $\Z[\frac{1}{n}]$, denoted by $\Heis_{2d+1}(\mu_n)$, is defined in the same way as the Heisenberg group $\Heis_{2d+1}(\Z/n\Z)$, but the vectors $\mathbf{a}$ and $\mathbf{b}$ in the matrix have coefficients in $\mu_n$, and $c$ belongs to $\mu_n^{\otimes 2}$. We have as previously an exact sequence
\begin{equation}
\label{eq:HCEtwist}
\begin{CD}
0 @>>> \mu_n^{\otimes 2} @>>> \Heis_{2d+1}(\mu_n) @>>> (\mu_n)^{2d} @>>> 0 \\
\end{CD}
\end{equation}
which is in fact an exact sequence of presheaves.

We note that $\Heis_{2d+1}(\mu_n)$ is a finite {\'e}tale $\Z[\frac{1}{n}]$-group scheme. We underline the fact that $\mu_n^{\otimes 2}$ is not representable by a finite flat group scheme over $\Z$, so $\Heis_{2d+1}(\mu_n)$ does not extend to a finite flat group scheme over $\Z$. Nevertheless, if $\zeta_n$ denotes a primitive $n$-th root of unity, then we have a non-canonical isomorphism between $\Heis_{2d+1}(\mu_n)$ and $\Heis_{2d+1}(\Z/n\Z)$ over $\Z[\frac{1}{n},\zeta_n]$.  Hence $\Heis_{2d+1}(\mu_n)$ extends to a constant group scheme over  $\Z[\zeta_n]$.

Unless otherwise specified, all torsors we consider are relative to the {\'e}tale topology. Thus, if $\Gamma$ is an \'etale group scheme over a scheme $X$, we denote by $H^1(X,\Gamma)$ the pointed set of isomorphism classes of $\Gamma$-torsors over $X$ for the {\'e}tale topology. When $\Gamma$ is abelian, this set is an abelian group.

If $\phi:\Gamma\to \Lambda$ is a morphism of $X$-group schemes, and if $\xi$ is a $\Gamma$-torsor over $X$, then the image of $\xi$ by the natural map $\phi_*:H^1(X,\Gamma)\to H^1(X,\Lambda)$ is a $\Lambda$-torsor.  We say, by abuse of notation, that this is the $\Lambda$-torsor associated to $\xi$, the morphism $\phi$ being omitted.

Let us recall a result of Sharifi  \cite{Sharifi} which allows one to produce torsors for $\Heis_{2d+1}(\mu_n)$ whose associated $(\mu_n)^{2d}$-torsor is given.

\begin{theorem}
\label{thm:Sharifi}
Let $X$ be a connected $\Z[\frac{1}{n}]$-scheme. Suppose we are given two $d$-tuples $\chi_1,\dots,\chi_d$ and $\chi_1',\dots,\chi_d'$ in $H^1(X,\mu_n)$ for some integer $d\geq 1$. Then there exists a $\Heis_{2d+1}(\mu_n)$-torsor $\xi$ over $X$ whose associated $(\mu_n)^{2d}$-torsor is the $2d$-tuple $(\chi_1,\dots,\chi_d,\chi_1',\dots, \chi_d')$ if and only if
$$
\sum_{i=1}^d [\chi_i\cup \chi_i'] = 0
$$
where the sum is computed in the group $H^2(X,\mu_n^{\otimes 2})$. Moreover, $\xi$ is connected if and only if the subgroup of $H^1(X,\mu_n)$ generated by $\chi_1,\dots,\chi_d,\chi_1',\dots, \chi_d'$ is isomorphic to $(\Z/n\Z)^{2d}$.
\end{theorem}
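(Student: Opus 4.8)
The statement is a lifting/obstruction problem for the central extension of \'etale sheaves of groups \eqref{eq:HCEtwist}, namely $0 \to \mu_n^{\otimes 2} \to \Heis_{2d+1}(\mu_n) \to (\mu_n)^{2d} \to 0$. The plan is to run the non-abelian cohomology sequence attached to this central extension and then to identify the resulting connecting map with a sum of cup products. First I would record that, since $\mu_n^{\otimes 2}$ is central in $\Heis_{2d+1}(\mu_n)$, the sequence \eqref{eq:HCEtwist} induces an exact sequence of pointed sets
$$ H^1(X, \Heis_{2d+1}(\mu_n)) \to H^1(X, (\mu_n)^{2d}) \xrightarrow{\ \delta\ } H^2(X, \mu_n^{\otimes 2}),$$
in which a class in $H^1(X,(\mu_n)^{2d})$ lifts to a $\Heis_{2d+1}(\mu_n)$-torsor if and only if its image under $\delta$ vanishes. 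One first checks that \eqref{eq:HCEtwist}, stated as presheaves, is exact as \'etale sheaves; this is immediate \'etale-locally, where $\mu_n \cong \Z/n\Z$ and one recovers the usual Heisenberg extension. Under the canonical identification $H^1(X,(\mu_n)^{2d}) = H^1(X,\mu_n)^{2d}$, the given $2d$-tuple $(\chi_1,\dots,\chi_d,\chi_1',\dots,\chi_d')$ is exactly the $(\mu_n)^{2d}$-torsor whose lift we seek.

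The heart of the argument is to compute $\delta$ explicitly. I would read off the defining $2$-cocycle of \eqref{eq:HCEtwist} from the matrix multiplication in $\Heis_{2d+1}(\mu_n)$: with the set-theoretic section sending $(\mathbf{a},\mathbf{b})$ to the matrix with $c=0$, the factor set is the bilinear map $f((\mathbf{a},\mathbf{b}),(\mathbf{a}',\mathbf{b}')) = \mathbf{a}\cdot\mathbf{b}' = \sum_{i=1}^d a_i\, b_i' \in \mu_n^{\otimes 2}$. Thus the class of \eqref{eq:HCEtwist} in $H^2((\mu_n)^{2d},\mu_n^{\otimes 2})$ is $\sum_{i} \mathrm{pr}_i \cup \mathrm{pr}_i'$, where $\mathrm{pr}_i,\mathrm{pr}_i'$ are the coordinate projections to $\mu_n$. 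Since $\delta$ is pullback of this extension class along the classifying map of the torsor, and pullback sends $\mathrm{pr}_i \mapsto \chi_i$, $\mathrm{pr}_i' \mapsto \chi_i'$ and commutes with cup product, I obtain $\delta(\chi_1,\dots,\chi_d') = \sum_{i=1}^d [\chi_i \cup \chi_i']$ in $H^2(X,\mu_n^{\otimes 2})$. Combined with the lifting criterion, this proves the first assertion. I expect this identification of $\delta$ with the cup product to be the main obstacle: one must match the non-abelian coboundary, which measures the failure of a lifted $1$-cochain to be a cocycle, with the cup-product cocycle, keeping careful track of the fact that the target is the twist $\mu_n^{\otimes 2}$ rather than $\mu_n$ --- which is exactly the reason for working with the twisted Heisenberg scheme.

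For the connectedness statement I would argue group-theoretically. The torsor $\xi$ is connected if and only if its classifying homomorphism $\rho\colon \pi_1^{\et}(X) \to \Heis_{2d+1}(\mu_n)$, formed after base change to $\Z[\frac1n,\zeta_n]$ where the group becomes constant, is surjective. Because $(\mu_n)^{2d}$ is abelian, the commutator subgroup of $\mathrm{im}\,\rho$ lies in the center $\mu_n^{\otimes 2}$, and the commutator pairing on $\Heis_{2d+1}$ is the symplectic form $\langle(\mathbf{a},\mathbf{b}),(\mathbf{a}',\mathbf{b}')\rangle = \mathbf{a}\cdot\mathbf{b}' - \mathbf{a}'\cdot\mathbf{b}$, which surjects onto $\mu_n^{\otimes 2}\cong \Z/n\Z$. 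Hence any subgroup of $\Heis_{2d+1}$ that surjects onto $(\mu_n)^{2d}$ already contains the center, and therefore equals $\Heis_{2d+1}$; so $\rho$ is surjective if and only if its composite with the projection to $(\mu_n)^{2d}$ is surjective. Finally I would translate surjectivity of $\pi_1^{\et}(X) \to (\mu_n)^{2d} \cong (\Z/n\Z)^{2d}$ into the stated condition using that $\Z/n\Z$ is injective as a module over itself: applying the exact functor $\Hom(-,\Z/n\Z)$ shows this surjectivity is equivalent to the dual map $(\Z/n\Z)^{2d} \to H^1(X,\mu_n)$ sending $e_i$ to $\chi_i$ being injective, that is, to the classes $\chi_1,\dots,\chi_d,\chi_1',\dots,\chi_d'$ generating a subgroup of $H^1(X,\mu_n)$ isomorphic to $(\Z/n\Z)^{2d}$.
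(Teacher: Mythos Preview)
The paper does not prove this statement directly: its entire proof is a citation to \cite[Proposition~2.3]{Sharifi}, with the remark that Sharifi's result (stated for Galois representations over a field) carries over to a connected $\Z[\frac{1}{n}]$-scheme $X$ by working with representations of $\pi_1^{\et}(X)$. Your argument is a correct, self-contained version of that obstruction-theoretic proof, and presumably very close to what Sharifi does. The computation of the factor set $f((\mathbf{a},\mathbf{b}),(\mathbf{a}',\mathbf{b}'))=\mathbf{a}\cdot\mathbf{b}'$ and the identification of the non-abelian coboundary $\delta$ with $\sum_i\chi_i\cup\chi_i'$ are exactly right, and your care with the twist $\mu_n^{\otimes 2}$ is the point of the construction.

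One imprecision worth flagging in the connectedness argument: the phrase ``formed after base change to $\Z[\frac1n,\zeta_n]$ where the group becomes constant'' is not quite what you want, because $X\times_{\Z[1/n]}\Z[\frac1n,\zeta_n]$ need not be connected, and connectedness of $\xi$ over $X$ is not the same as connectedness of its pullback to a component of that base change. Over a general $X$ the classifying datum is a continuous $1$-cocycle $\pi_1^{\et}(X,\bar x)\to \Heis_{2d+1}(\mu_n)_{\bar x}$ for the cyclotomic action rather than a homomorphism, and your dualization step implicitly uses that $H^1(X,\mu_n)=\Hom(\pi_1^{\et}(X),\Z/n\Z)$, which is only true when $\mu_n$ is constant on $X$. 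The commutator argument survives because the $\pi_1$-action preserves the central series, but the passage from ``$\bar\rho$ surjective'' to ``the $\chi_i$ generate a free rank-$2d$ subgroup of $H^1(X,\mu_n)$'' needs a short extra argument in the twisted setting (for instance, reducing the $(\mu_n)^{2d}$-torsor to a proper subgroup scheme, all of which are of the form $H\otimes\mu_n$ since the cyclotomic action on $(\Z/n\Z)^{2d}$ is by scalars). This is routine, and in the paper's applications (Theorem~\ref{thm:main}~(3)) the criterion is in any case only invoked after base change to $\overline{K}$, where $\mu_n$ is constant and your argument applies verbatim.
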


\begin{proof}
See \cite[Proposition 2.3]{Sharifi}. We note that Sharifi's result is stated in terms of (twisted) Galois representations over a field $K$ of characteristic prime to $n$, instead of torsors over a $\Z[\frac{1}{n}]$-scheme $X$. Nevertheless, his results immediately extend to our setting by considering representations of the {\'e}tale fundamental group of $X$.
\end{proof}

\begin{remark}
If $p$ and $q$ are relatively prime integers, it follows from the Chinese remainder theorem that $\Heis_{2d+1}(\Z/pq\Z)\simeq \Heis_{2d+1}(\Z/p\Z)\times \Heis_{2d+1}(\Z/q\Z)$, and similarly for twisted Heisenberg group schemes. Thus, we could assume that $n$ is the power of some prime number. However, we prefer for simplicity to work with arbitrary $n$.
\end{remark}


\section{Geometric Heisenberg group extensions}
\label{s:GeoH}

Let $K$ be a field, and let $\overline{K}$ be an algebraic closure of $K$. In our terminology, a hyperelliptic curve over $K$ is a smooth projective geometrically connected $K$-curve of genus $g\geq 1$, endowed with a degree $2$ map $\pi:C\to \PP^1_K$. This includes elliptic curves over $K$. In this setting, the Weierstrass points of $C$ are none other than the ramification points of $\pi$. We denote by $\tau$  the hyperelliptic involution of $C$.  The group $G = \{e,\tau\}$ acts on $C$, and the quotient morphism is the map $\pi:C \to \mathbb{P}^1_K$.

If $\xi$ is a $\Gamma$-torsor over $C$, and if $P_0 \in C(K)$ is a $K$-rational point of $C$, we say that $\xi$ splits over $P_0$ if $P_0^*\xi$ is the trivial $\Gamma$-torsor over $K$.

\begin{theorem}
\label{thm:main}
Let $n>1$ be an odd integer with $\Char(K)\nmid n$, and such that $[K(\mu_n):K]$ is prime to $n$. Let $C$ be a hyperelliptic curve over $K$, together with a $K$-rational Weierstrass point $P_0 \in C(K)$. Let $L_1,\dots,L_d$ and $L_1',\dots,L_d'$ in $\Pic^0(C)[n]$ such that
$$
\prod_{i=1}^d e_n(L_i,L_i')=1
$$
where $e_n$ denotes the Weil pairing. Then:
\begin{enumerate}
\item For $i=1,\dots,d$, there exists a unique \'etale $\mu_n$-torsor $\chi_i$ (resp. $\chi_i'$) over $C$ which splits over $P_0$, and whose associated $\Gm$-torsor is $L_i$ (resp. $L_i'$).
\item There exists an {\'e}tale $\Heis_{2d+1}(\mu_n)$-torsor $\xi$ which splits over the point $P_0$, and whose associated $(\mu_n)^{2d}$-torsor is the $2d$-tuple $(\chi_1,\dots,\chi_d,\chi_1',\dots, \chi_d')$. 
\item The torsor $\xi$ is geometrically connected if and only if the subgroup generated by the $L_i$ and the $L_i'$ is isomorphic to $(\Z/n\Z)^{2d}$.
\end{enumerate}
\end{theorem}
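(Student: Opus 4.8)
The plan is to derive all three statements from Theorem~\ref{thm:Sharifi}, the Kummer sequence on $C$, and the Hochschild--Serre spectral sequence, with the hyperelliptic involution $\tau$ and the Weierstrass point $P_0$ as the devices that pin down the lifts and force the relevant cup product to vanish over $K$ rather than merely over $\overline{K}$. For (1), the Kummer sequence $1\to\mu_n\to\Gm\xrightarrow{\,n\,}\Gm\to 1$ gives
$$0\longrightarrow K^*/(K^*)^n\longrightarrow H^1(C,\mu_n)\longrightarrow \Pic(C)[n]\longrightarrow 0,$$
using that $C$ is proper and geometrically connected, so $H^0(C,\O_C^*)=K^*$. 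If $s\colon C\to\Spec K$ denotes the structure morphism, the left-hand subgroup is precisely the image of $s^*$, and since $s\circ P_0=\mathrm{id}$ we have $P_0^*\circ s^*=\mathrm{id}$, so $P_0^*$ splits the sequence. Hence $\ker(P_0^*)$ maps isomorphically onto $\Pic(C)[n]$: lifting $L_i$ to any $\chi\in H^1(C,\mu_n)$ and setting $\chi_i:=\chi-s^*P_0^*\chi$ produces the unique class that splits over $P_0$ and has associated $\Gm$-torsor $L_i$, and likewise for $\chi_i'$.

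For (2), by Theorem~\ref{thm:Sharifi} it suffices to prove $\beta:=\sum_{i=1}^d[\chi_i\cup\chi_i']=0$ in $H^2(C,\mu_n^{\otimes 2})$, after which I adjust the resulting torsor to split over $P_0$. I filter $H^2(C,\mu_n^{\otimes 2})$ by the spectral sequence $E_2^{p,q}=H^p(\Gal(\overline{K}/K),H^q(C_{\overline{K}},\mu_n^{\otimes 2}))$ and kill the three graded pieces in order. First, under the canonical isomorphism $H^1(C_{\overline{K}},\mu_n)\cong\Pic^0(C_{\overline{K}})[n]$, the cup-product pairing into $H^2(C_{\overline{K}},\mu_n^{\otimes 2})\cong\mu_n$ is, up to sign, the Weil pairing; hence the image of $\beta$ in $E_\infty^{0,2}$ equals $\prod_i e_n(L_i,L_i')=1$ by hypothesis, and $\beta$ lies in the first filtration step.

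\emph{The crux} is the middle piece $E_\infty^{1,1}$, a subquotient of $H^1(\Gal(\overline{K}/K),H^1(C_{\overline{K}},\mu_n^{\otimes 2}))$, which is exactly the $\Pic^0(C)/n\Pic^0(C)$-type contribution flagged in the introduction. Here I use $\tau$. Since $P_0$ is a Weierstrass point, $\tau(P_0)=P_0$, so $\tau^*\chi_i$ again splits over $P_0$; its associated line bundle is $\tau^*L_i=-L_i$ because $\tau$ acts as $-1$ on $\Pic^0(C)$. The uniqueness in (1) then forces $\tau^*\chi_i=-\chi_i$ and $\tau^*\chi_i'=-\chi_i'$, so $\tau^*\beta=\beta$ by bilinearity. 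On the other hand $\tau$ acts as $-1$ on $H^1(C_{\overline{K}},\mu_n^{\otimes 2})\cong\Pic^0(C_{\overline{K}})[n]\otimes\mu_n$, hence as $-1$ on $E_\infty^{1,1}$; as $n$ is odd, the only $\tau$-invariant class there is $0$, so $\beta$ lies in the top filtration step $E_\infty^{2,0}=\mathrm{im}(s^*)$. Finally, writing $\beta=s^*\alpha$ and applying $P_0^*$ gives $\alpha=P_0^*\beta=\sum_i(P_0^*\chi_i)\cup(P_0^*\chi_i')=0$, since each $\chi_i,\chi_i'$ splits over $P_0$; thus $\beta=0$. Theorem~\ref{thm:Sharifi} now yields a $\Heis_{2d+1}(\mu_n)$-torsor with the prescribed associated $(\mu_n)^{2d}$-torsor. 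To make it split over $P_0$, I observe that its pullback under $P_0$ maps to the trivial class in $H^1(\Spec K,(\mu_n)^{2d})$, hence comes from the central subgroup $H^1(\Spec K,\mu_n^{\otimes 2})$; correcting by the central twist $s^*$ of the opposite class, and using $P_0^*\circ s^*=\mathrm{id}$, produces the desired $\xi$.

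Statement (3) is the connectedness criterion of Theorem~\ref{thm:Sharifi} applied over $\overline{K}$: the base change $\xi_{\overline{K}}$ is connected iff $\chi_1|_{\overline{K}},\dots,\chi_d'|_{\overline{K}}$ generate a copy of $(\Z/n\Z)^{2d}$ in $H^1(C_{\overline{K}},\mu_n)\cong\Pic^0(C_{\overline{K}})[n]$; under this isomorphism they correspond to the $L_i,L_i'$, and since these are $K$-rational the subgroup they generate coincides with the one in $\Pic^0(C)[n]$. The main obstacle throughout is the descent of cup-product triviality from $\overline{K}$ to $K$, i.e.\ the vanishing of the $E_\infty^{1,1}$-component, which the $\tau$-eigenvalue argument (using $n$ odd and $\tau(P_0)=P_0$) is designed to resolve; I expect the remaining hypothesis $[K(\mu_n):K]$ prime to $n$ to enter only in keeping the Galois-module identifications and the descent of the Weil-pairing computation clean, e.g.\ via a restriction--corestriction reduction to the case $\mu_n\subseteq K$.
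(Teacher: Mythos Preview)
Your argument is correct, and the overall architecture---use $\tau$-invariance and $P_0$-splitting to force $\beta=\sum_i\chi_i\cup\chi_i'$ to vanish, then invoke Sharifi and a central twist---matches the paper's. The technical implementation, however, is genuinely different. The paper does not filter $H^2(C,\mu_n^{\otimes 2})$ by the Hochschild--Serre spectral sequence for $C_{\overline K}\to C$. Instead, it exploits the $G=\{e,\tau\}$-quotient $C\to\mathbb{P}^1_K$: since $n$ is odd, $H^j(\mathbb{P}^1_K,\mu_n^{\otimes 2})\cong H^j(C,\mu_n^{\otimes 2})^G$ (Lemma~\ref{lem:ginvariant}), so the $\tau$-invariance of $\beta$ places it in $H^2(\mathbb{P}^1_K,\mu_n^{\otimes 2})$. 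The paper then analyzes that group via the Kummer sequence for $\mathbb{P}^1_K$ (Lemma~\ref{lem:P1case}): the $P_0$-splitting kills the $\Br(K)[n]$-part, leaving $\beta$ in the image of $(\Pic(\mathbb{P}^1_K)/n)\otimes\mu_n$, which injects (multiplication by $2$) into $H^2(C_{\overline K},\mu_n^{\otimes 2})$, where the Weil-pairing hypothesis finishes.

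What each approach buys: the paper's route is more concrete---it replaces the middle graded piece $E_\infty^{1,1}$ by the transparent fact that $\mathbb{P}^1$ has no $\Pic^0$---but it pays for this with two auxiliary lemmas and the restriction--corestriction reduction to $\mu_n\subseteq K$ (this is exactly where the hypothesis $\gcd([K(\mu_n):K],n)=1$ enters). Your spectral-sequence argument is more streamlined and, as you suspected, appears not to need that hypothesis at all: each of your three steps (restriction to $\overline K$, the $\tau$-eigenvalue on $E_\infty^{1,1}$, and the $P_0$-retraction on $E_\infty^{2,0}$) goes through for $\mu_n^{\otimes 2}$ directly. So your proof is in fact slightly sharper than the paper's as stated.
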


\begin{remark}
The Weil pairing is a non-degenerate bilinear pairing of finite $K$-group schemes
$$
e_n:J_C[n]\times J_C[n]\to \mu_n
$$
where $J_C$ denotes the Jacobian of $C$. In particular, if $L$ and $L'$ belong to $\Pic^0(C)[n]=J_C[n](K)$, then $e_n(L,L')$ belongs to $\mu_n(K)$.
\end{remark}

\begin{remark}
In Theorem~\ref{thm:main}~(2), the $\Heis_{2d+1}(\mu_n)$-torsor $\xi$ is unique up to a twist by a $\mu_n^{\otimes 2}$-torsor over $C$, which splits over $P_0$. This can be checked by going through the proof of Lemma~\ref{lem:centraltwist} below.
\end{remark}

\begin{remark}
\label{rem:priorresults}  The proof we will give of Theorem~\ref{thm:main} uses the hyperelliptic involution of $C$ to control cup products.  We thank the referee for
outlining a different approach which uses group theory, Hurwitz spaces and work on the Inverse Galois problem under some additional hypotheses.
For simplicity, assume $K(\mu_n) = K$, so that $\mu_n$ is isomorphic to $\mathbb{Z}/n\mathbb{Z}$ and $\Heis_{2d+1}(\mu_n)$ is isomorphic to the constant group
$\Heis_{2d+1}(\mathbb{Z}/n\mathbb{Z})$. Theorem \ref{thm:main}  follows if one can construct a regular cover of $\mathbb{P}^1_K$ with group $\Gamma = \Heis_{2d+1}(\mathbb{Z}/n\mathbb{Z})  \rtimes \mathbb{Z}/2\mathbb{Z}$ having $C$ as the quotient by $\Heis_{2d+1}(\mathbb{Z}/n\mathbb{Z}) $
and having inertia groups that are involutions mapping to the hyperelliptic involution of $C$.  Such a $\Gamma$-cover is a central $\mathbb{Z}/n\mathbb{Z}$-extension of a cover with group $G = (\mathbb{Z}/n\mathbb{Z})^{2d} \rtimes \mathbb{Z}/2\mathbb{Z}$.  When $n$ is prime,  \cite[Theorem 9.17.(1)]{Volk2} describes a method of
constructing $G$-covers of $\mathbb{P}^1_K$ of the required kind when $C$ is allowed to vary using Hurwitz spaces.  One can then apply results from
\cite{Volk1} to show that the constrained central embedding problem associated to constructing an appropriate $\Gamma$-cover has a solution under appropriate hypotheses.
\end{remark}

To prove Theorem~\ref{thm:main} we need the following results.

\begin{lemma}
\label{lem:ginvariant}  For all integers $j \ge 0$ there is a canonical isomorphism $H^j(\mathbb{P}^1_K,\mu_n) = H^j(C,\mu_n)^G$.
\end{lemma}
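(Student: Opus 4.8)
The plan is to realize both sides as étale cohomology groups on $\mathbb{P}^1_K$ and exploit that $\pi\colon C\to\mathbb{P}^1_K$ is finite together with the invertibility of $|G|=2$ modulo the odd integer $n$. First I would record that, since $\pi$ is a finite morphism, its higher direct images in the étale topology vanish, so $R^i\pi_*\mu_n=0$ for $i>0$ and the Leray spectral sequence collapses to a canonical isomorphism $H^j(C,\mu_n)\cong H^j(\mathbb{P}^1_K,\pi_*\mu_n)$ for every $j$. This is compatible with the $G$-action induced by the action of $G$ on $C$ over $\mathbb{P}^1_K$, so taking $G$-invariants reduces the statement to producing a canonical isomorphism $H^j(\mathbb{P}^1_K,\mu_n)\cong H^j(\mathbb{P}^1_K,\pi_*\mu_n)^G$.

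Next I would use that $n$ is odd in order to invert $|G|=2$. The element $e=\tfrac12(1+\tau)$ is an idempotent in $(\Z/n\Z)[G]$, and it splits every sheaf of $\Z/n\Z$-modules carrying a $G$-action into the direct sum of its image and its kernel; for any $(\Z/n\Z)[G]$-module the image of $e$ is exactly the $G$-invariant submodule. Applying this to $\pi_*\mu_n$ gives $\pi_*\mu_n=(\pi_*\mu_n)^G\oplus(1-e)\pi_*\mu_n$ as sheaves on $\mathbb{P}^1_K$, and since the same idempotent acts on cohomology and there cuts out the invariants, one obtains $H^j(\mathbb{P}^1_K,\pi_*\mu_n)^G=e\,H^j(\mathbb{P}^1_K,\pi_*\mu_n)=H^j(\mathbb{P}^1_K,(\pi_*\mu_n)^G)$. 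It therefore suffices to identify the sheaf $(\pi_*\mu_n)^G$ on $\mathbb{P}^1_K$ with $\mu_n$.

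The heart of the argument, and the step I expect to be the main obstacle, is the sheaf identification $\mu_n\xrightarrow{\sim}(\pi_*\mu_n)^G$, because it is precisely here that the ramification of $\pi$ at the Weierstrass points must be confronted. I would show that the adjunction unit $\mu_n\to\pi_*\mu_n$ factors through the $G$-invariants and is an isomorphism onto them by a stalk computation at geometric points $\bar y\to\mathbb{P}^1_K$. Since $\mu_n$ is lisse (as $\Char(K)\nmid n$), the stalk $(\pi_*\mu_n)_{\bar y}$ is a direct sum of copies of $\Z/n\Z$ indexed by the points of the fiber of $\pi$ over $\bar y$, with $G$ permuting the summands according to its action on the fiber and acting trivially on the coefficient $\mu_n$ because $G$ acts by $K$-automorphisms. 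Over a point where $\pi$ is unramified the fiber consists of two points swapped by $\tau$, so the invariants are the diagonal copy of $\Z/n\Z$; over a branch point the underlying reduced fiber is the single $\tau$-fixed Weierstrass point, so the stalk is one copy of $\Z/n\Z$ on which $G$ acts trivially. In both cases the invariants are canonically $\mu_n$, which is exactly the image of the adjunction map. Stringing the isomorphisms together yields $H^j(\mathbb{P}^1_K,\mu_n)=H^j(\mathbb{P}^1_K,(\pi_*\mu_n)^G)=H^j(\mathbb{P}^1_K,\pi_*\mu_n)^G=H^j(C,\mu_n)^G$, and tracing through the construction (the adjunction unit inducing pullback on cohomology) identifies the composite with $\pi^*$, giving the asserted canonical isomorphism.
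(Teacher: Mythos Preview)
Your proof is correct. The paper's own proof is a one-liner invoking the Cartan--Leray type spectral sequence $H^p(G,H^q(C,\mu_n))\Rightarrow H^{p+q}(\mathbb{P}^1_K,\mu_n)$, which degenerates because $|G|=2$ is prime to $n$. Your route is different in presentation but close in substance: instead of that spectral sequence you use the (degenerate) Leray spectral sequence for the finite map $\pi$ to pass to $H^j(\mathbb{P}^1_K,\pi_*\mu_n)$, then split off the $G$-invariants with the idempotent $\tfrac12(1+\tau)\in(\Z/n\Z)[G]$, and finally identify $(\pi_*\mu_n)^G$ with $\mu_n$ by a stalk check. In effect you have unpacked what is hidden in the paper's spectral sequence: for a \emph{ramified} quotient $C\to C/G$, the abutment of the Grothendieck spectral sequence for $(-)^G\circ\Gamma_C$ is a priori $H^*(\mathbb{P}^1_K,(\pi_*\mu_n)^G)$, and one still needs the identification $(\pi_*\mu_n)^G\simeq\mu_n$ that you carry out explicitly at the Weierstrass points. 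So your argument is a bit longer but also a bit more transparent about where the ramification enters; the paper's version is slicker but leaves that sheaf identification implicit.
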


\begin{proof} This is clear from the spectral sequence $H^p(G,H^q(C,\mu_n)) \Rightarrow H^{p+q}(\mathbb{P}^1_K,\mu_n)$ together with the fact that $G$ has order $2$ and all of the groups $H^q(C,\mu_n)$ are annihilated by multiplication by the odd integer $n$.
\end{proof}

\begin{lemma}
\label{lem:P1case}
There are canonical isomorphisms
\begin{equation}
\label{eq:Kummer1}
H^1(\mathbb{P}^1_K,\mu_n) = K^*/(K^*)^n\quad \mathrm{and}\quad H^1(\mathbb{P}^1_K,\mathbb{G}_m) = \mathrm{Pic}(\mathbb{P}^1_K) =\mathbb{Z}
\end{equation}
and an isomorphism of Brauer groups 
\begin{equation}
\label{eq:BRP1}
H^2(\mathbb{P}^1_K,\mathbb{G}_m) = \mathrm{Br}(\mathbb{P}^1_K) = \mathrm{Br}(K)
\end{equation}
induced by pulling back Azumaya algebras from $K$ to $\mathbb{P}^1_K$.  There is an 
exact sequence
\begin{equation}
\label{eq:nice}
0 \to \mathrm{Pic}(\mathbb{P}^1_K)/n \to H^2(\mathbb{P}^1_K,\mu_n) \to H^2(\mathbb{P}^1_K,\mathbb{G}_m)[n] \to 0.
\end{equation}
Using the above isomorphisms, the sequence (\ref{eq:nice}) becomes
\begin{equation}
\label{eq:nicer}
0 \to \mathbb{Z}/n\mathbb{Z} \to H^2(\mathbb{P}^1_K,\mu_n) \to  \Br(K)[n] \to 0.
\end{equation}
Moreover, any class in $H^2(\mathbb{P}^1_K,\mu_n)$ that splits over some $K$-rational point of $\mathbb{P}^1_K$ belongs to the kernel of the homomorphism $H^2(\mathbb{P}^1_K,\mu_n) \to \Br(K)[n]$.
\end{lemma}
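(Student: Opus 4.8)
The plan is to deduce every assertion from the Kummer exact sequence of \'etale sheaves on $\mathbb{P}^1_K$, which is exact precisely because $\Char(K)\nmid n$:
$$0 \to \mu_n \to \Gm \xrightarrow{x\mapsto x^n} \Gm \to 0.$$
First I would record the three classical inputs on which everything rests: the global-unit computation $H^0(\mathbb{P}^1_K,\Gm)=K^*$, valid because $\mathbb{P}^1_K$ is proper and geometrically integral; the identification $H^1(\mathbb{P}^1_K,\Gm)=\Pic(\mathbb{P}^1_K)=\mathbb{Z}$ via the degree map; and the Brauer-group identity $\Br(\mathbb{P}^1_K)=\Br(K)$, which I regard as the main work and discuss last.

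Granting these, the first several assertions are immediate. The low-degree terms of the long exact cohomology sequence of the Kummer sequence give
$$0 \to K^*/(K^*)^n \to H^1(\mathbb{P}^1_K,\mu_n) \to \Pic(\mathbb{P}^1_K)[n] \to 0,$$
and since $\Pic(\mathbb{P}^1_K)=\mathbb{Z}$ is torsion-free the right-hand term vanishes, yielding the $H^1(\mu_n)$ half of \eqref{eq:Kummer1}; the statements $H^1(\mathbb{P}^1_K,\Gm)=\Pic(\mathbb{P}^1_K)=\mathbb{Z}$ and \eqref{eq:BRP1} are exactly the inputs just cited. Continuing the same long exact sequence one degree higher produces
$$0 \to \Pic(\mathbb{P}^1_K)/n \to H^2(\mathbb{P}^1_K,\mu_n) \to \Br(\mathbb{P}^1_K)[n] \to 0,$$
which is \eqref{eq:nice}; substituting $\Pic(\mathbb{P}^1_K)=\mathbb{Z}$ and $\Br(\mathbb{P}^1_K)=\Br(K)$ turns it into \eqref{eq:nicer}.

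For the final assertion I would argue by functoriality. Any $K$-rational point $P$ is a section of the structure map $f\colon\mathbb{P}^1_K\to\Spec K$, so $P^*\circ f^*=\mathrm{id}$ on $\Br(K)$; since $f^*\colon\Br(K)\to\Br(\mathbb{P}^1_K)$ is the isomorphism \eqref{eq:BRP1}, the restriction $P^*\colon\Br(\mathbb{P}^1_K)\to\Br(K)$ equals $(f^*)^{-1}$, \emph{independently of the chosen point} $P$. Applying the Kummer sequence over $\Spec K$ (where $\Pic(\Spec K)=0$, so $H^2(\Spec K,\mu_n)=\Br(K)[n]$) and using naturality with respect to $P$, one gets a commutative square identifying the surjection in \eqref{eq:nicer} with $P^*$ followed by the Kummer identification over $K$. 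Hence if $\alpha\in H^2(\mathbb{P}^1_K,\mu_n)$ splits over some $K$-rational point $P$, then $P^*\alpha=0$ in $H^2(\Spec K,\mu_n)$, so $\alpha$ lies in the kernel of the map to $\Br(K)[n]$; the point here is that the conclusion does not depend on which point $\alpha$ splits over, precisely because $P^*=(f^*)^{-1}$ for every $P$.

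The main obstacle is the Brauer-group identity $\Br(\mathbb{P}^1_K)=\Br(K)$. I would obtain it from the Leray spectral sequence $H^p(K,R^qf_*\Gm)\Rightarrow H^{p+q}(\mathbb{P}^1_K,\Gm)$, using $f_*\Gm=\Gm$, $R^1f_*\Gm=\mathbb{Z}$, and crucially $R^2f_*\Gm=0$, the vanishing coming from $\Br(\mathbb{P}^1_{\overline{K}})=0$ by Tsen's theorem. Only two rows survive, so the abutment fits into a long exact sequence; since $H^1(K,\mathbb{Z})=0$, this gives a surjection $\Br(K)\twoheadrightarrow\Br(\mathbb{P}^1_K)$ whose kernel is the image of a differential out of $H^0(K,\mathbb{Z})$, and the existence of a $K$-rational point makes $f^*$ split injective, forcing that differential to vanish and $f^*$ to be an isomorphism. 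Alternatively one may simply cite the standard computation of the Brauer group of $\mathbb{P}^1$ over a field.
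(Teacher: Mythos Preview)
Your proposal is correct and follows essentially the same route as the paper: both deduce \eqref{eq:Kummer1} and \eqref{eq:nice} from the long exact Kummer sequence using $\Pic(\mathbb{P}^1_K)=\mathbb{Z}$, and both obtain $\Br(\mathbb{P}^1_K)=\Br(K)$ from the Leray/Hochschild--Serre spectral sequence for $f:\mathbb{P}^1_K\to\Spec K$ together with Tsen's theorem. The only cosmetic difference is in how the $d_2$ differential is killed: the paper observes that the edge map $H^1(\mathbb{P}^1_K,\Gm)\to H^1(\mathbb{P}^1_{\overline{K}},\Gm)$ is an isomorphism, while you use that a $K$-rational point makes $f^*$ split injective; and for the final assertion you spell out that $P^*=(f^*)^{-1}$ independently of $P$, whereas the paper simply says $f^*$ gives a section of the surjection in \eqref{eq:nicer}---the content is the same.
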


\begin{proof} One has $H^1(\mathbb{P}^1_K,\mathbb{G}_m) = \mathrm{Pic}(\mathbb{P}^1_K) = \mathbb{Z}$
via the degree map.  
The cohomology of the Kummer sequence
$$1 \to \mu_n \to \mathbb{G}_m \xrightarrow{[n]} \mathbb{G}_m \to 1$$
then gives (\ref{eq:Kummer1}) and (\ref{eq:nice}).  To analyze $H^2(\mathbb{P}^1_K,\mathbb{G}_m)$
we use the Hochschild-Serre spectral sequence
\begin{equation}
\label{eq:HS}
H^p(K,H^q(\mathbb{P}^1_{\overline{K}},\mathbb{G}_m)) \Rightarrow H^{p+q}(\mathbb{P}^1_K,\mathbb{G}_m).
\end{equation}
By Tsen's theorem, $H^2(\mathbb{P}^1_{\overline{K}},\mathbb{G}_m) = 0$.  The action 
of the profinite group $\mathrm{Gal}(\overline{K}/K)$ on $H^1(\mathbb{P}^1_{\overline{K}},\mathbb{G}_m) = \mathrm{Pic}(\mathbb{P}^1_{\overline{K}}) = \mathbb{Z}$ is trivial, so
$$H^1(K,H^1(\mathbb{P}^1_{\overline{K}},\mathbb{G}_m)) = 0.$$
We have
$$H^2(K,H^0(\mathbb{P}^1_{\overline{K}},\mathbb{G}_m)) = H^2(K,\overline{K}^*) = \Br(K).$$
Finally, the restriction map 
$$H^1(\mathbb{P}^1_K,\mathbb{G}_m) \to H^1(\mathbb{P}^1_{\overline{K}},\mathbb{G}_m)$$
is an isomorphism.   Putting these facts into the spectral sequence (\ref{eq:HS}) gives
(\ref{eq:BRP1}).  The last statement follows from the fact that the pullback from $K$ to $\mathbb{P}^1_K$ induces
a section of the surjection $H^2(\mathbb{P}^1_K,\mu_n) \to \mathrm{Br}(K)[n]$.
\end{proof}

The final lemma we will need to prove Theorem \ref{thm:main} has to do with twisting Heisenberg torsors in order to ensure that they split over
a particular point.

\begin{lemma}
\label{lem:centraltwist}
Let $\xi$ be a $\Heis_{2d+1}(\mu_n)$-torsor over $C$, whose associated $(\mu_n)^{2d}$-torsor splits over $P_0$. Then there exists a $\Heis_{2d+1}(\mu_n)$-torsor $\xi'$ over $C$ with the same associated $(\mu_n)^{2d}$-torsor such that $\xi'$ splits over $P_0$ and  $\xi'$ is isomorphic to $\xi$ over $\overline{K}$.
\end{lemma}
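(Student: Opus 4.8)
The plan is to realize $\xi'$ as a central twist of $\xi$ by a $\mu_n^{\otimes 2}$-torsor that is pulled back from the base field $K$, so that it becomes trivial over $\overline{K}$ automatically while correcting the behavior of $\xi$ at $P_0$. Write $G = \Heis_{2d+1}(\mu_n)$ and $Z = \mu_n^{\otimes 2}$, which is central in $G$ by the exact sequence~\eqref{eq:HCEtwist}. Since $Z$ is central, multiplying a $G$-valued cocycle by a $Z$-valued cocycle (viewed inside $G$) again yields a cocycle; this defines a twisting operation $(\xi,\eta)\mapsto \xi\cdot\eta$ of $H^1(C,Z)$ on $H^1(C,G)$ that is additive in $\eta$, natural under pullback along morphisms of schemes, and leaves the associated $(\mu_n)^{2d}$-torsor unchanged (because $Z$ maps to the identity in $(\mu_n)^{2d}$). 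I will produce the required twisting class $\eta$ and set $\xi' = \xi\cdot\eta$.

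First I would locate the obstruction to splitting over $P_0$. Let $f\colon C\to\Spec K$ denote the structure morphism, so that $f\circ P_0 = \mathrm{id}_{\Spec K}$. By hypothesis the associated $(\mu_n)^{2d}$-torsor of $\xi$ splits over $P_0$; by naturality this says that $P_0^*\xi\in H^1(K,G)$ maps to the trivial class in $H^1(K,(\mu_n)^{2d})$. The exact sequence of pointed cohomology sets attached to~\eqref{eq:HCEtwist}, namely $H^1(K,Z)\to H^1(K,G)\to H^1(K,(\mu_n)^{2d})$, then shows that $P_0^*\xi$ is the image of some class $\alpha\in H^1(K,Z)$; equivalently, $P_0^*\xi$ is the trivial $G$-torsor twisted by $\alpha$.

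Next I would define $\eta = f^*(-\alpha)\in H^1(C,Z)$ and set $\xi' = \xi\cdot\eta$. Three verifications remain. The associated $(\mu_n)^{2d}$-torsor of $\xi'$ equals that of $\xi$ because the twist is central. Splitting over $P_0$ follows from naturality and additivity of the twist: $P_0^*\xi' = (P_0^*\xi)\cdot(P_0^*\eta)$, and $P_0^*\eta = (f\circ P_0)^*(-\alpha) = -\alpha$, so $P_0^*\xi'$ is the trivial torsor twisted by $\alpha + (-\alpha) = 0$, hence trivial. Finally, base changing to $\overline{K}$, the class $(-\alpha)|_{\overline{K}}$ lies in $H^1(\overline{K},\mu_n^{\otimes 2})$, which vanishes since $\overline{K}$ is separably closed (concretely, $\mu_n^{\otimes 2}\cong\mu_n$ over $\overline{K}$ and $H^1(\overline{K},\mu_n) = \overline{K}^*/(\overline{K}^*)^n = 0$); hence $\eta|_{C_{\overline{K}}} = f_{\overline{K}}^*\bigl((-\alpha)|_{\overline{K}}\bigr)$ is trivial, and twisting by a trivial torsor gives $\xi'|_{\overline{K}}\iso\xi|_{\overline{K}}$.

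The main point requiring care is the formal setup of the central twisting action and its compatibility with pullback, since $G$ is non-abelian and the cohomology objects involved are only pointed sets; once additivity in the central variable and naturality under $P_0^*$ and base change are established, the three properties are immediate. The identification of the obstruction class $\alpha$ through exactness of the sequence of pointed sets, and the vanishing $H^1(\overline{K},\mu_n^{\otimes 2}) = 0$, are the two facts that make the pulled-back twist do exactly what is needed.
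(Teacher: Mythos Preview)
Your proof is correct and follows essentially the same route as the paper: identify the obstruction $P_0^*\xi$ as coming from a class in $H^1(K,\mu_n^{\otimes 2})$ via exactness of the non-abelian cohomology sequence of~\eqref{eq:HCEtwist}, then twist $\xi$ by the pullback along the structure morphism of (the inverse of) that class. The only cosmetic differences are that the paper phrases the twist as a contracted product $\xi\times^{\mu_n^{\otimes 2}}_C f^*c_0$ citing Giraud, whereas you describe it directly in cocycle language, and you spell out the vanishing $H^1(\overline{K},\mu_n^{\otimes 2})=0$ where the paper simply says $c_0$ splits over $\overline{K}$.
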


\begin{proof}
Consider the following commutative diagram of pointed sets with exact rows \cite[Chap. III, Proposition 3.3.1]{Giraud71}
$$
\begin{CD}
1 @>>> H^1(C,\mu_n^{\otimes 2}) @>>> H^1(C,\Heis_{2d+1}(\mu_n)) @>a>> H^1(C,(\mu_n)^{2d}) \\
@. @VP_0^*VV @VP_0^*VV @VP_0^*VV \\
1 @>>> H^1(K,\mu_n^{\otimes 2}) @>>> H^1(K,\Heis_{2d+1}(\mu_n)) @>a_K>> H^1(K,(\mu_n)^{2d}) \\
\end{CD}
$$
in which the vertical maps are the restrictions to $P_0$.
By definition of an exact sequence of pointed sets, the kernel of $a$ is exactly the image of the map $H^1(C,\mu_n^{\otimes 2})\to H^1(C,\Heis_{2d+1}(\mu_n))$, and similarly for $a_K$. 

By assumption, $a(\xi)$ belongs to the kernel of $P_0^*$, and hence $P_0^*\xi$ belongs to the kernel of $a_K$ by commutativity. It follows that $P_0^*\xi$ comes from a $\mu_n^{\otimes 2}$-torsor over $K$, which we denote by $c_0$. Let us denote by $f:C\to \Spec(K)$ the structural morphism. Since the group $\mu_n^{\otimes 2}$ is central in $\Heis_{2d+1}(\mu_n)$, it follows from \cite[Chap. III, Remarque 3.4.4]{Giraud71} that the contracted product
$$
\xi':=\xi\times^{\mu_n^{\otimes 2}}_C f^*c_0
$$
is a $\Heis_{2d+1}(\mu_n)$-torsor over $C$, which splits over $P_0$ because $P_0^*f^*c_0=c_0$. Finally, $\xi$ and $\xi'$ are isomorphic over $\overline{K}$, because $c_0$ is just a Galois cohomology class over $K$, hence splits over $\overline{K}$.
\end{proof}

\begin{proof}[Proof of Theorem \ref{thm:main}]
Since the curve $C$ is geometrically connected, we have $\Gm(C)=\Gm(K)=K^*$. Hence the Kummer exact sequence on $C$ gives 
$$
0\to K^*/(K^*)^n \to H^1(C,\mu_n) \to \Pic(C)[n] \to 0.
$$
Moreover, the map $P_0^*:H^1(C,\mu_n)\to H^1(K,\mu_n)\simeq K^*/(K^*)^n$ is a retraction of the natural map $K^*/(K^*)^n \to H^1(C,\mu_n)$. One deduces that, given $L\in\Pic(C)[n]$, there exists a unique $\mu_n$-torsor $\chi$ on $C$ which splits over the point $P_0$, and whose associated $\Gm$-torsor is $L$. This proves part (1).

For part (2), let us first prove that $\tau(\chi_i)=-\chi_i$ for all $i$. We have a sequence of canonical isomorphisms
$$
H^1(K,\mu_n) \simeq H^1(\mathbb{P}^1_K,\mu_n) \simeq H^1(C,\mu_n)^G
$$
and the map $P_0^*:H^1(C,\mu_n)^G \to H^1(K,\mu_n)$ is the inverse isomorphism. Now, the $\mu_n$-torsor $\chi_i+\tau(\chi_i)$ is invariant under the action of $G$, and $P_0$ (which is a ramification point of $C\to \mathbb{P}^1_K$) is invariant by $\tau$, from which it follows that
$$
P_0^*(\chi_i+\tau(\chi_i))=2P_0^*\chi_i = 0.
$$

The map $P_0^*$ being an isomorphism, this proves that $\chi_i+\tau(\chi_i)=0$. It follows that we have, for $i=1,\dots,d$,
$$
\tau(\chi_i\cup \chi_i') = \tau(\chi_i) \cup \tau(\chi_i') = (-\chi_i) \cup (-\chi_i') = \chi_i \cup \chi_i'.
$$
Thus $\chi_i\cup \chi_i'\in H^2(C,\mu_n^{\otimes 2})^G$.

The map $H^2(C,\mu_n^{\otimes 2})\to H^2(C\otimes_K K(\mu_n),\mu_n^{\otimes 2})$ is injective, because $[K(\mu_n):K]$ is prime to $n$. Hence, in order to prove that $\sum_{i=1}^d [\chi_i\cup \chi_i']=0$, we may (and we do) assume that $K$ contains a primitive $n$-th root of unity. Then for any $K$-scheme $X$ and for all $j$ we have natural isomorphisms
$$
H^j(X,\mu_n^{\otimes 2}) \simeq H^j(X,\mu_n) \otimes \mu_n.
$$

By Lemma~\ref{lem:P1case}, we have a  commutative diagram
\begin{equation}
\label{eq:bigdiagram}
\begin{CD}
0 @>>> (\Pic(\mathbb{P}^1_K)/n)\otimes \mu_n @>>> H^2(\mathbb{P}^1_K,\mu_n^{\otimes 2}) @>>> \Br(K)[n]\otimes \mu_n @>>> 0 \\
@. @VVV @VVV @VVV \\
0 @>>> (\Pic(C)/n)\otimes \mu_n @>>> H^2(C,\mu_n^{\otimes 2}) @>>> \Br(C)[n]\otimes \mu_n @>>> 0 \\
@. @VVV @VVV \\
@. (\Pic(C\otimes_K\overline{K})/n)\otimes \mu_n @>\sim>> H^2(C\otimes_K\overline{K},\mu_n^{\otimes 2}) \\
\end{CD}
\end{equation}
in which the rows are obtained by tensoring the Kummer exact sequences by $\mu_n$, and the vertical maps are obtained by base change.

We have proved that $\chi_i\cup \chi_i'$ is $G$-invariant, so that it comes from a class in $H^2(\mathbb{P}^1_K,\mu_n^{\otimes 2})$ by Lemma~\ref{lem:ginvariant}. Moreover, because $\chi_i\cup\chi_i'$ splits over the point $P_0$, the corresponding class in  $H^2(\mathbb{P}^1_K,\mu_n^{\otimes 2})$ splits over the image of $P_0$ in $\mathbb{P}^1_K$.   By diagram \eqref{eq:bigdiagram} and Lemma~\ref{lem:P1case}, it belongs to the image of $(\Pic(\mathbb{P}^1_K)/n)\otimes \mu_n \to H^2(\mathbb{P}^1_K,\mu_n^{\otimes 2})$.

On the other hand, the map $C\to \mathbb{P}^1_K$ has degree $2$, hence the natural map
$$
(\Pic(\mathbb{P}^1_K)/n) \to (\Pic(C\otimes_K\overline{K})/n)
$$
can be identified, via the degree map, with the multiplication-by-$2$ map $[2]:\Z/n\Z \to \Z/n\Z$,
which is an isomorphism, since $n$ is odd. It follows that the composition of the two vertical maps in the first column of \eqref{eq:bigdiagram} is the multiplication-by-$2$ map $\mu_n\to\mu_n$.  Hence this composition is an isomorphism. Composing with the isomorphism at the bottom of \eqref{eq:bigdiagram}, it follows that $\chi_i\cup \chi_i'$, and more generally $\sum_{i=1}^d [\chi_i\cup \chi_i']$, can be identified with its image in $H^2(C\otimes_K\overline{K},\mu_n^{\otimes 2})$.

Finally, by \cite[Chap. V, Remark 2.4 (f)]{Milne80}, the following diagram commutes
$$
\begin{CD}
H^1(C\otimes_K\overline{K},\mu_n) @.\times H^1(C\otimes_K\overline{K},\mu_n) @>\cup>> H^2(C\otimes_K\overline{K},\mu_n^{\otimes 2}) \\
@| @| @| \\
\Pic^0(C\otimes_K\overline{K})[n] @.\times \Pic^0(C\otimes_K\overline{K})[n] @>e_n>> \mu_n
\end{CD}
$$
We deduce that the image of $\sum_{i=1}^d [\chi_i\cup \chi_i']$ in $H^2(C\otimes_K\overline{K},\mu_n^{\otimes 2})=\mu_n$ can be identified with
$$
\prod_{i=1}^d e_n(L_i,L_i')
$$
which is trivial by hypothesis. According to Theorem~\ref{thm:Sharifi}, there exists a $\Heis_{2d+1}(\mu_n)$-torsor $C'\to C$ whose associated $(\mu_n)^{2d}$-torsor is the $2d$-tuple $(\chi_1,\dots,\chi_d,\chi_1',\dots, \chi_d')$. By Lemma~\ref{lem:centraltwist}, we can make a constant field twist of the central action of the twisted Heisenberg group on $C'$ to ensure that $C' \to C$ splits over $P_0$. This completes the proof of part (2).

For part (3), we use that by Kummer theory, we have an isomorphism
$$
H^1(C\otimes_K \overline{K},(\mu_n)^{2d}) \simeq \Hom((\Z/n\Z)^{2d},\Pic^0(C\otimes_K \overline{K}))
$$
under which connected torsors correspond to injective morphisms. But the image of the $(\mu_n)^{2d}$-torsor $(\chi_1,\dots,\chi_d,\chi_1',\dots, \chi_d')$ is none other than the map defined by the $2d$-tuple $(L_1,\dots,L_d,L_1',\dots, L_d')$. Therefore, our $(\mu_n)^{2d}$-torsor is geometrically connected  if and only if the subgroup generated by the $L_i$ and the $L_i'$ in $\Pic^0(C\otimes_K \overline{K})$ is isomorphic to $(\Z/n\Z)^{2d}$. The map $\Pic^0(C)\to \Pic^0(C\otimes_K \overline{K})$ being injective, it suffices to check this over $K$. The conclusion of part (3) follows from the last statement in Theorem~\ref{thm:Sharifi}.
\end{proof}


\subsection{An example with $d=1$ over $\Q$}

\begin{corollary}
\label{cor:H1}
Let $\lambda\in \Q^{\times}$, $\lambda^2 \neq 1$, and let $n>1$ be an odd integer. Let $C$ be the hyperelliptic curve defined over $\Q$ by the affine equation $y^2=x^{2n}-(1+\lambda^2)x^n+\lambda^2$, and let $K$ be a number field such that $[K(\mu_n):K]$ is prime to $n$. Then there exists a geometrically connected $\Heis_3(\mu_n)$-torsor over $C\otimes_\Q K$, which splits over the point $P_0=(1,0)$.
\end{corollary}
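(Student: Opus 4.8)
The plan is to apply Theorem~\ref{thm:main} with $d=1$ to the curve $C\otimes_\Q K$ over $K$. First I would record that the defining polynomial factors as
\[
x^{2n}-(1+\lambda^2)x^n+\lambda^2=(x^n-1)(x^n-\lambda^2),
\]
which is separable precisely because $\lambda^2\neq 1$, so that $C$ is a genuine hyperelliptic curve of genus $n-1\geq 1$, and $P_0=(1,0)$ is a $\Q$-rational (hence $K$-rational) Weierstrass point. The remaining hypotheses of Theorem~\ref{thm:main} hold for $C\otimes_\Q K$: one has $\Char(K)=0\nmid n$, the integer $n$ is odd, and $[K(\mu_n):K]$ is prime to $n$ by assumption. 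Thus the corollary reduces to producing two classes $L_1,L_1'\in\Pic^0(C)(\Q)[n]$ with $e_n(L_1,L_1')=1$ and $\langle L_1,L_1'\rangle\cong(\Z/n\Z)^2$ geometrically; parts (1)--(3) of Theorem~\ref{thm:main}, applied over $K$ after base change, then yield a geometrically connected $\Heis_3(\mu_n)$-torsor splitting over $P_0$.

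To construct the classes I would use the order-$n$ automorphism $\sigma\colon(x,y)\mapsto(\zeta_n x,y)$ of $C_{\overline\Q}$. The quotient $C/\langle\sigma\rangle$ is the genus-$0$ conic $y^2=(u-1)(u-\lambda^2)$ with $u=x^n$, and the degree-$n$ cover $C\to C/\langle\sigma\rangle$ is totally ramified exactly over the four $\sigma$-fixed points $O_+=(0,\lambda)$, $O_-=(0,-\lambda)$ and the two points at infinity $\infty_+,\infty_-$, all of which are $\Q$-rational (the leading coefficient $1$ being a square). I set $L_1=[O_+-O_-]$ and $L_1'=[\infty_+-\infty_-]$ in $\Pic^0(C)(\Q)$. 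Since $\Pic^0$ of a genus-$0$ conic is trivial, the degree-$0$ divisors $O_+-O_-$ and $\infty_+-\infty_-$ descend to principal divisors on the conic; pulling their defining functions back along the totally ramified cover multiplies every multiplicity by $n$, which shows $nL_1=nL_1'=0$.

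Two verifications remain. For orthogonality I would exploit the auxiliary $\overline\Q$-automorphism $\rho\colon(x,y)\mapsto(\nu/x,\lambda y/x^n)$ with $\nu^n=\lambda^2$; it interchanges $\{O_+,O_-\}$ with $\{\infty_+,\infty_-\}$, hence carries $L_1$ to $\pm L_1'$, and being an automorphism of the principally polarized Jacobian it preserves the Weil pairing. Combined with the skew-symmetry of $e_n$ this gives $e_n(L_1,L_1')^2=1$, so $e_n(L_1,L_1')=1$ because $n$ is odd. The main obstacle is the independence statement $\langle L_1,L_1'\rangle\cong(\Z/n\Z)^2$, i.e.\ that $L_1,L_1'$ have exact order $n$ and satisfy no smaller relation. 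Here I would classify the functions $h$ whose divisor is supported on the four $\sigma$-fixed points: such a divisor is $\sigma$-invariant, so $\sigma^*h=\zeta_n^k\,h$ for some $k$, forcing $h=x^k\bar g$ with $\bar g$ pulled back from the conic. Comparing multiplicities in $\mathrm{div}(h)=\alpha(O_+-O_-)+\beta(\infty_+-\infty_-)$ at the four points yields $k+nc_1=\alpha$ and $k+nc_2=-\alpha$ (with $c_1,c_2\in\Z$), so $2\alpha\equiv 0\pmod n$ and hence $\alpha\equiv 0$ since $n$ is odd; then $k\equiv 0$, and the infinity equations give $\beta\equiv 0$. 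This is exactly the required injectivity of $(\Z/n\Z)^2\to\Pic^0(C_{\overline\Q})[n]$.

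Finally I would observe that $L_1,L_1'\in\Pic^0(C)(\Q)\subseteq\Pic^0(C\otimes_\Q K)$, with the pairing value and the geometric independence unchanged by base change, and conclude by invoking Theorem~\ref{thm:main}. The genuinely delicate point is the multiplicity computation classifying functions supported on the four fixed points; everything else is formal once the conic quotient $C/\langle\sigma\rangle$ and the involution $\rho$ are identified.
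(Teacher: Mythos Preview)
Your argument is correct and follows the same overall strategy as the paper: verify the hypotheses of Theorem~\ref{thm:main} with $d=1$ by exhibiting two independent $n$-torsion classes whose Weil pairing is trivial. The differences are in how those two verifications are carried out.

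For the existence and independence of $L_1,L_1'$, the paper simply cites \cite[Lemma~3.3]{GL12}, whereas you unpack this by constructing the classes as differences of the four $\sigma$-fixed points and then classifying functions supported there via the eigen-decomposition $h=x^k\bar g$. Your multiplicity computation is clean and self-contained; what you gain is that the corollary no longer relies on an external reference, at the cost of a paragraph of honest work.

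For the Weil pairing, however, you are working much harder than necessary. Since you already observe that $L_1,L_1'\in\Pic^0(C)(\Q)$, the value $e_n(L_1,L_1')$ lies in $\mu_n(\Q)$, and for odd $n>1$ this group is trivial. This one-line argument is exactly what the paper uses, and it makes the auxiliary involution $\rho$ (and the care needed with signs and the involution property to force $\epsilon_1\epsilon_2=1$) entirely superfluous. Your $\rho$-argument is not wrong, but it obscures the point.
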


We note that, if $n$ is prime, or more generally if $\varphi(n)$ is prime to $n$, then the hypotheses of Corollary~\ref{cor:H1} are satisfied for $K=\Q$.

\begin{proof}
We note that $P_0$ is a rational Weierstrass point of $C$. It is proved in \cite[Lemma~3.3]{GL12} that $\Pic^0(C)$ contains two independent classes of order $n$, which we denote by $L$ and $L'$. The Weil pairing $e_n(L,L')$ takes values in $\mu_n(\Q)=\{1\}$, therefore $e_n(L,L')=1$. If we consider the classes $L$ and $L'$ in $\Pic^0(C\otimes_\Q K)$, then their Weil pairing over $K$ has the same value, hence the assumptions of Theorem~\ref{thm:main} are satisfied, and the result follows.
\end{proof}


\subsection{An example with $d=2$, $n=3$ over $\Q$}

\begin{corollary}
\label{cor:H2}
There exists a hyperelliptic curve $C$ defined over $\Q$, together with a rational Weierstrass point $P_0$, and a geometrically connected $\Heis_5(\mu_3)$-torsor over $C$ which splits over $P_0$.
\end{corollary}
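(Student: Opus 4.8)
The plan is to deduce this from Theorem~\ref{thm:main} with $K=\Q$, $n=3$ and $d=2$, in the same spirit as Corollary~\ref{cor:H1}. First I would record that $\Q$ satisfies the hypotheses of Theorem~\ref{thm:main} for $n=3$: the integer $3$ is odd, $\Char(\Q)=0\nmid 3$, and $[\Q(\mu_3):\Q]=\varphi(3)=2$ is prime to $3$. Hence it is enough to exhibit a hyperelliptic curve $C$ over $\Q$ with a $\Q$-rational Weierstrass point $P_0$, together with four classes $L_1,L_2,L_1',L_2'\in\Pic^0(C)[3]$ generating a subgroup isomorphic to $(\Z/3\Z)^4$ and satisfying $e_3(L_1,L_1')\,e_3(L_2,L_2')=1$; parts (1)--(3) of Theorem~\ref{thm:main} then deliver the geometrically connected $\Heis_5(\mu_3)$-torsor splitting over $P_0$.

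The orthogonality relation costs nothing. Exactly as in Corollary~\ref{cor:H1}, each Weil pairing $e_3(L_i,L_i')$ lies in $\mu_3(\Q)=\{1\}$ because $\Q$ has no nontrivial cube root of unity, so the product is automatically $1$ regardless of the choice of rational classes. The problem therefore reduces to finding a hyperelliptic curve over $\Q$, with a rational Weierstrass point, whose Jacobian has $\dim_{\F_3}\Pic^0(C)[3]\ge 4$. I would note at once that this forces genus at least $4$: since the Weil pairing of two $\Q$-rational classes lands in $\mu_3(\Q)=\{1\}$, the group $\Pic^0(C)[3]=J_C[3](\Q)$ is totally isotropic inside $J_C[3]\cong(\Z/3\Z)^{2g}$ for the nondegenerate alternating Weil pairing, whence $\dim_{\F_3}\Pic^0(C)[3]\le g$. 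When $g=4$ the four classes must therefore fill out a maximal isotropic subspace.

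To build such a curve I would start from the mechanism underlying Corollary~\ref{cor:H1}, where the genus-$2$ curve $y^2=(x^3-1)(x^3-\lambda^2)$ already carries two independent rational $3$-torsion classes. The natural hope is to assemble a higher-genus hyperelliptic curve out of several cubic blocks $x^3-a_i$ and to produce one rational order-$3$ class per block (or per pair of blocks) by an explicit Pellian relation $P^2-fQ^2=(\text{const})$, whose solution yields a function with divisor equal to $3$ times a degree-zero divisor. One then arranges a rational Weierstrass point by giving $f$ a rational root or taking odd degree, and selects four of the resulting classes.

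The main obstacle is genuinely this construction, for two intertwined reasons. First, the identity $f-q^2=(\text{const})$ responsible for the order-$3$ class in the genus-$2$ case is special to sextic $f$, so the naive block curves do not obviously carry enough $3$-torsion, and a more careful choice of $f$---or a curve whose Jacobian is isogenous to a product of such abelian surfaces---is required. Second, any abundance of rational torsion tends to come from extra automorphisms and a decomposable Jacobian, which is in tension with the requirement that $C$ be \emph{hyperelliptic}; reconciling these is the crux. In practice I would pin down an explicit $C$ (if necessary by a computer search over such families), and then prove that the four chosen classes are $\F_3$-independent and span $(\Z/3\Z)^4$ by reducing modulo a prime $p\ne 3$ of good reduction: reduction is injective on $3$-torsion, so independence of the images in $J_C(\F_p)$ forces independence over $\Q$. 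With four independent rational classes in hand, Theorem~\ref{thm:main} finishes the proof.
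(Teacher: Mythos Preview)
Your reduction is exactly the paper's: apply Theorem~\ref{thm:main} with $K=\Q$, $n=3$, $d=2$, observe that $\mu_3(\Q)=\{1\}$ makes the Weil-pairing condition vacuous, and conclude that the whole statement comes down to exhibiting a hyperelliptic curve over $\Q$ with a rational Weierstrass point and four $\F_3$-independent classes in $\Pic^0(C)[3]$. Your side remark that such a curve must have genus at least~$4$ (rational $3$-torsion being totally isotropic for the Weil pairing) is correct and a pleasant addition, though the paper does not make it.

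The gap is that you do not actually produce the curve. You describe the mechanism behind the genus-$2$ example, flag the obstacles to scaling it up, and propose to ``pin down an explicit $C$ (if necessary by a computer search)''; but that is a plan, not a proof. The paper closes this gap by citation: \cite[Theorem~2.3]{GLsurvey} constructs, following an idea of Craig, a hyperelliptic curve over $\Q$ with a rational Weierstrass point whose $\Pic^0$ contains four independent classes of order~$3$. Once that input is in hand, your argument (identical to the paper's) finishes immediately via Theorem~\ref{thm:main}. So nothing in your approach is wrong, but the one nontrivial ingredient is left undone; you should either carry out the construction or cite it.
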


\begin{proof}
Following an idea of Craig,  a construction is given in  \cite[Theorem~2.3]{GLsurvey} of a hyperelliptic curve $C$ over $\Q$, together with a rational Weierstrass point $P_0$, and four independent classes in $\Pic(C)[3]$. The same argument as in Corollary~\ref{cor:H1} proves that the assumptions of Theorem~\ref{thm:main} are satisfied, hence the result.
\end{proof}


\subsection{A remark on the general case}
\label{rmk:general}

Let $F$ be a number field, and let $C$ be a hyperelliptic curve of genus $g$ over $F$, with an $F$-rational Weierstrass point $P_0$. Let $K$ be the field of definition of the points of $J_C[n]$, the full $n$-torsion subgroup of the Jacobian of $C$. Then $K(\mu_n)=K$, because the Weil pairing is non-degenerate. It is easy to check that the hypotheses of Theorem~\ref{thm:main} are satisfied for the curve $C$ over $K$ when putting $d=g$.  Hence there exists a geometrically connected, {\'e}tale $\Heis_{2g+1}(\Z/n\Z)$-torsor over $C$ which splits over $P_0$, and whose associated $(\Z/n\Z)^{2g}$-torsor is the maximal (pointed) {\'e}tale Galois cover of $C$ whose Galois group is an $n$-torsion abelian group.

Given $F$ and $n$, one may ask which number fields $K$ can be obtained as the field of definition of the points of $J_C[n]$ for some hyperelliptic curve $C$ of genus $g$. We note that, in any case, $K/F$ is Galois and
$$
\Gal(K/F) \hookrightarrow \GSp_{2g}(\Z/n\Z)
$$
where $\GSp$ denotes the general symplectic group. Indeed, $J_C[n]$ is a Galois module with underlying abelian group isomorphic to $(\Z/n\Z)^{2g}$, and the Weil pairing on $J_C[n]$ is non-degenerate and alternating.  It follows that $\Gal(\overline{K}/K)$ acts on $J_C[n]$ via $\GSp_{2g}(\Z/n\Z)$, so 
$[K:F]$ divides $\# \GSp_{2g}(\Z/n\Z)$.  Sharper bounds on $[K:F]$ can be obtained with more hypotheses on $C$, e.g. by assuming that 
$J_C$ has complex multiplication.

In view of such constructions involving torsion points on hyperelliptic curves, the following question naturally arises (see also \cite[Question~3.5]{GL12}):

\begin{question}
\label{quest:Jacobian}
Given positive integers $n$ and $r$, does there exist a hyperelliptic curve $C$ defined over $\Q$ such that $\Pic^0(C)$ contains a subgroup isomorphic to $(\Z/n\Z)^r$?
\end{question}

The curve defined in Corollary~\ref{cor:H1} gives a positive answer to this question when $r=2$ and $n$ is arbitrary. To our knowledge, this is currently the strongest known general result concerning this question. Solutions for specific small pairs $(n,r)$ are given in \cite{HLP00}.

In view of the previous discussion, one may of course replace $\Q$ by an arbitrary number field $F$.


\section{Arithmetic specialization}
\label{s:Arithmetic}

Throughout this section, $K$ denotes a number field, and $S$ a finite set of places of $K$. We denote by $\mathcal{O}_{K,S}$ the ring of $S$-integers of $K$, obtained by inverting in the full ring of integers of $K$ all finite places which belong to $S$.

Let us consider a finite {\'e}tale (not necessarily commutative) $\mathcal{O}_{K,S}$-group scheme $G$. We denote by $H^1_{\et}(\mathcal{O}_{K,S},G)$  the cohomology set which classifies {\'e}tale $G$-torsors over $\mathcal{O}_{K,S}$. We denote by $G_K$ the generic fiber of $G$, and by $H^1(K,G_K)$ the (possibly non-abelian) Galois cohomology set $H^1(\Gal(\bar{K}/K),G_K(\bar{K}))$.

Let us recall that the ``restriction to the generic fiber'' map $H^1_{\et}(\mathcal{O}_{K,S},G)\to H^1(K,G_K)$ is injective. This allows us to identify $H^1_{\et}(\mathcal{O}_{K,S},G)$ with a subset of $H^1(K,G_K)$.

We now define a set of cohomology classes which are locally trivial at all places in $S$.  These classes will be called $S$-split.

\begin{definition}
\label{S_split_def}
If $S$ is a finite set of places of $K$, we let
$$
H^1_{\text{$S$-split}}(\mathcal{O}_{K,S},G) := \ker\left(H^1_{\et}(\mathcal{O}_{K,S},G) \to \prod_{v\in S} H^1_{\et}(K_v,G_{K_v})\right).
$$
\end{definition}

In more algebraic terms, $H^1_{\text{$S$-split}}(\mathcal{O}_{K,S},G)$ is the subset of $H^1(K,G_K)$ consisting of $K$-algebras which are unramified at finite places outside $S$, and in which all places in $S$ (including the infinite ones) are totally split. In particular, such algebras are unramified at all finite places of $K$.

We are now ready to state our specialization theorem, which follows immediately from the results of Bilu and the third author \cite{bg18}.

\begin{theorem}
\label{thm:specialization}
Let us consider the setting of Theorem~\ref{thm:main}, with the additional assumption that $K$ is a number field. Let $\psi:\tilde{C}\to C$ be a geometrically connected {\'e}tale $\Heis_{2d+1}(\mu_n)$-torsor which splits over the point $P_0$, whose existence is ensured by Theorem~\ref{thm:main}. Then there exists a finite set $S$ of places of $K$ with the following properties.
\begin{enumerate}
\item $S$ contains all places above $n$;
\item the torsor $\psi:\tilde{C}\to C$ extends to a $\Heis_{2d+1}(\mu_n)$-torsor between smooth $\mathcal{O}_{K,S}$-curves.
\end{enumerate}
Moreover, given any such $S$, there exist infinitely many (isomorphism classes of) quadratic extensions $L/K$ with the following properties.  There is a point $P\in C(L)$ such that the specialization of $\psi$ at $P$ is a connected $\Heis_{2d+1}(\mu_n)$-torsor and belongs to the subset $H^1_{\text{$S$-split}}(\mathcal{O}_{L,S},\Heis_{2d+1}(\mu_n))$.

Furthermore, there is a constant $c>0$ depending only on $K$, $\psi$ and $S$ for which the following is true.  Let $g(C)$ denote the genus of $C$.  
For sufficiently large positive $N$, the number of (isomorphism classes of) such fields $L$ whose relative discriminant $\Delta(L/K)$ satisfies
$$
\bigl|\mathrm{N}_{K/\Q}\Delta(L/K)\bigr|^{1/[K:\Q]} \leq N
$$
is at least $cN^{[K:\Q]/(4g(C)+2)}/\log N$.
\end{theorem}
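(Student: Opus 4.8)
The strategy is to check that the cover $\psi:\tilde{C}\to C$ meets the hypotheses of the specialization machinery of Bilu and the third author \cite{bg18} and then to quote their results, since all of the genuine arithmetic content of the count resides there. Accordingly the argument separates into two parts: first, producing a set $S$ that fulfils conditions (1) and (2); second, specializing over $S$-integral points to produce the quadratic fields $L$ together with their lower-bound count.

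For the first part I would spread out. As $C$ is smooth, projective and geometrically connected over $K$ and carries the $K$-rational point $P_0$, it has a smooth projective model $\mathcal{C}$ over $\mathcal{O}_{K,S}$ for some finite $S$, with $P_0$ spreading to a section in the smooth locus. Since $\Heis_{2d+1}(\mu_n)$ is finite {\'e}tale over $\Z[\tfrac{1}{n}]$ and $\psi$ is a finite {\'e}tale $\Heis_{2d+1}(\mu_n)$-torsor on the generic fibre, a standard limit argument extends $\psi$, after enlarging $S$, to a finite {\'e}tale $\Heis_{2d+1}(\mu_n)$-torsor $\tilde{\mathcal{C}}\to\mathcal{C}$ of smooth $\mathcal{O}_{K,S}$-curves; enlarging $S$ once more to contain every place above $n$ yields (1) and (2).

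With $S$ fixed, I would specialize as in \cite{bg18}. The quadratic fields come from the hyperelliptic structure: for $x\in\PP^1(K)$ off the branch locus, the fibre $\pi^{-1}(x)$ is a single closed point of degree $2$, giving $P\in C(L)$ with $L=K(P)$ quadratic. Because $\mathcal{C}$ is smooth and proper over $\mathcal{O}_{K,S}$, every such $P$ extends to an $\mathcal{O}_{L,S}$-point, so $P^{*}\psi$ is automatically an {\'e}tale $\Heis_{2d+1}(\mu_n)$-torsor over $\mathcal{O}_{L,S}$, hence unramified outside $S$. It then remains to arrange, for a large family of $x$, that $P^{*}\psi$ is totally split at every place of $S$ and that it stays connected with full group $\Heis_{2d+1}(\mu_n)$. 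The first is secured by constraining $x$ to a prescribed nonempty $v$-adic neighbourhood of the image of $P_0$ for each $v\in S$: since $\psi$ splits over $P_0$ and splitting is $v$-adically open, the specialization is split at $v$. Connectedness is the quantitative Hilbert-irreducibility input of \cite{bg18}, which excludes only a thin set of $x$ and so coexists with the finitely many local constraints just imposed.

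The quantitative lower bound is the core of \cite{bg18}, which I would transcribe. Writing $y^2=f(x)$ for the affine model, one has $L=K(\sqrt{f(x)})$, so $\Delta(L/K)$ is governed by $f(x)$ and the genus enters through the $2g(C)+2$ branch points of $\pi$; counting $x$ of bounded height whose value $f(x)$ has the requisite squarefree-type behaviour and lies in a new square class, subject to the local conditions above, produces at least $c\,N^{[K:\Q]/(4g(C)+2)}/\log N$ distinct fields $L$ with $|\mathrm{N}_{K/\Q}\Delta(L/K)|^{1/[K:\Q]}\le N$, the exponent being exactly the one \cite{bg18} yields for a degree-$2$ cover of $\PP^1$ branched at $2g(C)+2$ points. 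I expect the main obstacle to be not any single estimate but the compatibility of all constraints at once: one must confirm that forcing the torsor to split at all of $S$, including the archimedean places, still leaves enough specialization points whose $f$-values populate distinct square classes for the bound to survive. This bookkeeping is precisely what \cite{bg18} is designed to handle, which is why the theorem follows from it at once.
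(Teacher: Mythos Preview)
Your proposal is correct and follows essentially the same route as the paper: spread out to obtain $S$ (the paper names this the Chevalley--Weil theorem), then invoke \cite{bg18} for the specialization and the quantitative count. The only minor sharpening the paper makes explicit is that the rational Weierstrass point $P_0$ lets one take an affine model $y^2=f(x)$ with $\deg f=2g(C)+1$ (one branch point sits at infinity), and it is this degree that feeds directly into the exponent $[K:\Q]/(4g(C)+2)$ from \cite[Theorems~4.3 and~4.7]{bg18}; your phrasing in terms of ``$2g(C)+2$ branch points'' is not wrong but slightly obscures why the odd number $2g+1$ appears.
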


In the statement above, slightly abusing notation, we denote by the same letter $S$ the set of places of $L$ lying above places in $S$.

Here is another way to state the conclusion of Theorem~\ref{thm:specialization}.  For infinitely many quadratic extensions $L/K$, we obtain by specializing $\psi$ an extension of degree $n^{2d+1}$ of $L$ that is a $\Heis_{2d+1}(\mu_n)$-torsor, in which all finite places are unramified and places above $S$ are totally split.

In the case where $K(\mu_n)=K$, $\Heis_{2d+1}(\mu_n)$ is isomorphic to the constant Heisenberg group scheme $\Heis_{2d+1}(\Z/n\Z)$, and we obtain by specializing $\psi$ a Galois extension of $L$ with group $\Heis_{2d+1}(\Z/n\Z)$, in which all finite places are unramified and places above $S$ are totally split.

\begin{remark}
Given a set $S$ satisfying (1) and (2) in Theorem~\ref{thm:specialization}, any larger set also satisfies these conditions. When enlarging $S$, the condition that the specialization of $\psi$ at $P$ belongs to $H^1_{\text{$S$-split}}(\mathcal{O}_{L,S},\Heis_{2d+1}(\mu_n))$ is stronger, which has the effect of enlarging the constant $c$ in the quantitative statement.
\end{remark}

\begin{proof}[Proof of Theorem~\ref{thm:specialization}]
The existence of a set $S$ satisfying conditions (1) and (2) follows by elementary considerations on the reduction of covers of curves, known as the Chevalley-Weil theorem; see for example \cite[Section~4.2]{Se97}.  A hyperelliptic curve with a rational Weierstrass point admits an affine model of the form $y^2=f(x)$ where $f\in K[x]$ is a polynomial of degree $2g+1$.  Theorem~\ref{thm:specialization} now follows from  \cite[Theorems~4.3 and 4.7]{bg18}.
\end{proof}


\subsection{Unramified twisted Heisenberg torsors over quadratic fields}

By combining Corollary~\ref{cor:H1} and Theorem~\ref{thm:specialization}, we obtain the following result. 

\begin{corollary}
Let $p$ be an odd prime. Then there exist infinitely many (imaginary and real) quadratic fields $L$ which admit a connected $\Heis_3(\mu_p)$-torsor, unramified at all finite places of $L$. Moreover, given a finite set $T$ of prime numbers containing $p$, there exist infinitely many such $L$ for which primes lying above $T$ in $L$ are totally split in the extension corresponding to the $\Heis_3(\mu_p)$-torsor.
\end{corollary}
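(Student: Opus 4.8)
The plan is to feed the geometric torsor produced by Corollary~\ref{cor:H1} into the specialization machine of Theorem~\ref{thm:specialization}, both applied with base field $K=\Q$.

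First I would fix the data. Take $n=p$ and $d=1$, and choose any $\lambda\in\Q^\times$ with $\lambda^2\neq 1$, say $\lambda=2$, so that $C$ is the hyperelliptic curve $y^2=(x^p-1)(x^p-4)$ over $\Q$ with the rational Weierstrass point $P_0=(1,0)$. Since $p$ is an odd prime, $\varphi(p)=p-1$ is prime to $p$, so the hypotheses of Corollary~\ref{cor:H1} hold with $K=\Q$; it furnishes a geometrically connected $\Heis_3(\mu_p)$-torsor $\psi\colon\tilde C\to C$ over $\Q$ which splits over $P_0$. This is exactly the input required by Theorem~\ref{thm:specialization}, whose running hypotheses (those of Theorem~\ref{thm:main}, now with $K=\Q$ a number field) are thereby satisfied.

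Next I would run Theorem~\ref{thm:specialization}. It yields a finite set $S$ of places of $\Q$ containing $p$, together with infinitely many quadratic fields $L/\Q$ carrying a point $P\in C(L)$ whose specialization of $\psi$ is a connected $\Heis_3(\mu_p)$-torsor lying in $H^1_{\text{$S$-split}}(\mathcal{O}_{L,S},\Heis_3(\mu_p))$; by the algebraic description of that group, such a torsor is unramified at every finite place of $L$, which gives the first assertion. For the refinement, given a finite set $T$ of primes containing $p$, I would enlarge $S$ so that $S\supseteq T$; this preserves conditions (1) and (2) of the theorem, as observed in the remark following it, and the $S$-split condition then forces every prime of $L$ above $T$ to be totally split in the corresponding $\Heis_3(\mu_p)$-extension.

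The one point that needs genuine care, and the step I expect to be the main obstacle, is producing infinitely many fields of \emph{each} signature. The black-box count in Theorem~\ref{thm:specialization} only guarantees $\gg N^{1/(4(p-1)+2)}/\log N$ fields of bounded relative discriminant, which is far too sparse to deduce both signatures from density alone. Instead I would reopen the construction of \cite{bg18} underlying the theorem: the specialization points may be taken with $x$-coordinate $a$ ranging over the rationals in any prescribed real interval, so that $L=\Q(\sqrt{f(a)})$ with $f(x)=(x^p-1)(x^p-4)$. Because $p$ is odd, the map $a\mapsto a^p$ is an increasing bijection of $\R$, and one finds $f(a)<0$ exactly for $1<a<4^{1/p}$ and $f(a)>0$ for $a<1$ or $a>4^{1/p}$. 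Confining $a$ to a subinterval of the first range yields imaginary $L$, and to the second range yields real $L$; since each range contains infinitely many rationals and the selection procedure of \cite{bg18} respects such an archimedean restriction, we obtain infinitely many $L$ of each signature satisfying all the stated splitting conditions, completing the proof.
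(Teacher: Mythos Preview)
Your proposal is correct and follows essentially the same route as the paper: apply Corollary~\ref{cor:H1} over $K=\Q$ (using that $[\Q(\mu_p):\Q]=p-1$ is prime to $p$), then feed the resulting torsor into Theorem~\ref{thm:specialization}, enlarging $S$ to contain $T$ for the splitting refinement. The paper's own proof is nothing more than these two sentences.

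Where you go further is in justifying the parenthetical ``(imaginary and real)'', which the paper's proof does not address explicitly. Your argument---restricting the $x$-coordinate $a$ to a real interval on which $f(a)=(a^p-1)(a^p-4)$ has a prescribed sign, and observing that the specialization procedure of \cite{bg18} accommodates such an archimedean constraint---is the right idea and is indeed available in that reference. This is not a different method, just a detail the paper leaves implicit in its citation of \cite[Theorems~4.3 and 4.7]{bg18}.
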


We assume here that $p$ is prime in order to ensure that $[\Q(\mu_p):\Q]$ is prime to $p$, which is required in the statement of Corollary~\ref{cor:H1}. When applying Theorem~\ref{thm:specialization}, we choose a sufficiently large set $S$ containing $T$ and satisfying the required conditions.

\medbreak
One can also deduce from Theorem~\ref{thm:specialization} a quantitative version of the statement above.


\subsection{Unramified Heisenberg Galois extensions over quadratic extensions of cyclotomic fields}

By combining Corollary~\ref{cor:H1} and Theorem~\ref{thm:specialization} over the cyclotomic field $\Q(\zeta_n)$, one obtains Theorem~\ref{thm:intro} stated in the introduction.


\subsection{General remarks about the unramified Inverse Galois problem}

It is a folklore conjecture that every finite group occurs as the Galois group of an unramified Galois extension of some quadratic number field.
 In the case of finite abelian groups, this would be an immediate consequence of the Cohen-Lenstra heuristics. 
 
In fact, in the abelian case, the strongest general result concerning this folklore conjecture is obtained by arithmetic specialization from the hyperelliptic curve of Corollary~\ref{cor:H1}. More precisely, one obtains from this curve infinitely many imaginary quadratic fields whose class group contains a subgroup isomorphic to $(\Z/n\Z)^2$, and the rest follows from class field theory.

The construction of unramified Galois extensions of small degree number fields has a long history. Shafarevich proved that the Inverse Galois problem over $\Q$ has a solution for solvable groups. Using Shafarevich-type methods, it was recently proved by Kim \cite{Kim19} that, if $G$ is a solvable group of exponent $g$, there exists a number field $K$ of degree $g$ such that $G$ can be realized as the Galois group of an everywhere unramified extension of $K$. Since $\Heis_{2d+1}(\Z/n\Z)$ is a metabelian group of exponent $n$, one deduces from Kim's result that there exist number fields $K$ of degree $n$ over which $\Heis_{2d+1}(\Z/n\Z)$ can be realized as the Galois group of an everywhere unramified extension.  However, the approach in \cite{Kim19} by itself does not apply to the folklore conjecture as soon as $G$ has exponent larger than $2$.   

Our Theorem~\ref{thm:intro} is a result in the direction of the folklore conjecture, using a completely different, geometric approach. The advantage of this approach is that it can in principle lead to  proofs of the folklore conjecture for various $G$ of large exponent provided one can construct hyperelliptic curves with suitable properties.  
More explicitly, the proof of Theorem~\ref{thm:intro} suggests a connection between Question~\ref{quest:Jacobian} and the folklore conjecture.


\end{document}